\DeclareMathAlphabet{\mathbbe}{U}{bbold}{m}{n}
  \definecolor{cyan}{rgb}{.15,.38,.61}
  \definecolor{maroon}{rgb}{0.5,0,0}
  \definecolor{lilac}{rgb}{0.45,0.31,0.59}
\newtheorem{theorem}{Theorem}[subsection]
\newtheorem{proposition}[theorem]{Proposition}
\theoremstyle{definition}
\newtheorem{definition}[theorem]{Definition}
\newtheorem{example}[theorem]{Example}
\newtheorem{notation}[theorem]{Notation}
\newtheorem*{acknowledgements}{Acknowledgments}
\theoremstyle{remark}
\newtheorem{remark}[theorem]{Remark}
\newtheorem{exercise}[theorem]{Exercise}
\let\c@equation\c@theorem
\numberwithin{equation}{subsection}
\newcommand{\ob}{\mathrm{ob}}
\newcommand{\To}{\Rightarrow}
\newcommand{\cat}[1]{\textup{\textsf{#1}}}
\newcommand{\vcat}[1]{{{#1}\text{-}\cat{Cat}}}
\newcommand{\mr}[1]{\mathrm{#1}}
\newcommand{\cC}{\mathsf{C}}
\newcommand{\DDelta}{\mathbbe{\Delta}}
\begin{document}

\title{Complicial sets, an overture}
\author{Emily Riehl}
\date{\today}

\address{Department of Mathematics\\Johns Hopkins University \\ 3400 N Charles Street \\ Baltimore, MD 21218}
\email{eriehl@math.jhu.edu}

\begin{abstract}
The aim of these notes is to introduce the intuition motivating the notion of a \emph{complicial set}, a simplicial set with certain marked ``thin'' simplices that witness a composition relation between the simplices on their boundary. By varying the marking conventions, complicial sets can be used to model $(\infty,n)$-categories for each $n \geq 0$, including $n=\infty$. For this reason, complicial sets present a fertile setting for thinking about weak infinite dimensional categories in varying dimensions. This overture is presented in three acts: the first introducing simplicial models of higher categories; the second defining the Street nerve, which embeds strict $\omega$-categories as \emph{strict} complicial sets; and the third exploring an important saturation condition on the marked simplices in a complicial set and presenting  a variety of model structures that capture their basic homotopy theory. Scattered throughout are suggested exercises for the reader who wants to engage more deeply with these notions.
\end{abstract}

\maketitle

\setcounter{tocdepth}{2}
\tableofcontents

As the objects that mathematicians study increase in sophistication, so do their natural habitats. On account of this trend, it is increasingly desirable to replace mere 1-categories of objects and the morphisms between them, with infinite-dimensional categories containing 2-morphisms between 1-morphisms, 3-morphisms between 2-morphisms, and so on. The principle challenge in working with infinite-dimensional categories is that the naturally occurring examples are \emph{weak} rather than \emph{strict}, with composition of $n$-morphisms only associative and unital up to an $n+1$-morphism that is an ``equivalence'' in some sense. The complexity is somewhat reduced in the case of $(\infty,n)$-\emph{categories}, in which all $k$-morphisms are weakly invertible for $k>n$, but even in this case, explicit models of these schematically defined $(\infty,n)$-categories can be extremely complicated.

\emph{Complicial sets} provide a relatively parsimonious model of infinite-dimensional categories, with special cases modeling $(\infty,0)$-categories (also called $\infty$-\emph{groupoids}), $(\infty,1)$-categories (the ubiquitous $\infty$-\emph{categories}), indeed $(\infty,n)$-categories for any $n$, and also including the general case of $(\infty,\infty)$-categories. Unlike other models of infinite-dimensional categories, the definition of a complicial set is extremely simple to state: it is a simplicial set with a specified collection of marked ``thin'' simplices, in which certain elementary anodyne extensions exist. These anodyne extensions provide witnesses for a weak composition law and guarantee that the thin simplices are equivalences in a sense defined by this weak composition. 

This overture is dividing into three acts, each comprising one part of the three-hour mini course that generated these lecture notes. In the first, we explore how a simplicial set can be used to model the weak composition of an $(\infty,1)$-category and consider the extra structure required to extend these ideas to provide a simplicial model of $(\infty,2)$-categories. This line of inquiry leads naturally to the definition of a a complicial set as a \emph{stratified} (read ``marked'') simplicial set in which composable simplices admit composites.

 In the second part, we delve into the historical motivations for this stratified simplicial set based model of higher categories. John Roberts proposed the original definition of \emph{strict} complicial sets, which admit unique extensions along the elementary anodyne inclusions, as a conjectural model for strict $\omega$-categories \cite{roberts}. Ross Street defined a nerve functor from $\omega$-categories into simplicial sets \cite{street}, and Dominic Verity proved that it defines a full and faithful embedding into the category of stratified simplicial sets whose essential image is precisely the strict complicial sets \cite{verity-complicial}. While we do not have the space to dive into proof of this result here, we nonetheless describe the Street nerve in some detail as it is an important source of examples of both strict and also weak complicial sets, as is explained in part three.

In the final act, we turn our attention to those complicial sets that most accurately model $(\infty,n)$-categories. Their markings are \emph{saturated}, in the sense that every simplex that behaves structurally like an equivalence, is marked. We present a variety of model structures, due to Verity, that encode the basic homotopy theory of complicial sets of various flavors, including those that are $n$-\emph{trivial}, with every simplex above dimension $n$ marked, and saturated. The saturation condition is essential for a conjectural equivalence between the complicial sets models of $(\infty,n)$-categories and other models known to satisfy the axiomatization of Barwick--Schommer-Pries \cite{BSP}, which passes through a complicial nerve functor due to Verity. This result will appear in a future paper.

\begin{acknowledgements}
This document evolved from lecture notes written to accompany a three-hour mini course entitled ``Weak Complicial Sets'' delivered at the Higher Structures in Geometry and Physics workshop at the MATRIX Institute from June 6-7, 2016. The author wishes to thank Marcy Robertson and Philip Hackney, who organized the workshop, the MATRIX Institute  for providing her with the opportunity to speak about this topic, and the NSF for financial support through the grant DMS-1551129. In addition, the author is grateful for personal conversations with the two world experts---Dominic Verity and Ross Street---who she consulted while preparing these notes.
\end{acknowledgements}

\section{Introducing complicial sets}\label{part:introduction}

Infinite dimensional categories have morphisms in each dimension that satisfy a weak composition law, which is associative and unital up to higher-dimensional morphisms rather than on the nose. There is no universally satisfactory definition of ``weak composition''; instead a variety of models of infinite-dimensional categories provide settings to work with this notion.

A \emph{complicial set}, nee.~\emph{weak complicial set}, is a \emph{stratified} simplicial set, with a designed subset of ``thin'' marked simplices marked, that admits extensions along certain maps. Complicial sets model weak infinite-dimensional categories, sometimes called $(\infty,\infty)$-\emph{categories}. By requiring all simplices above a fixed dimension to be thin, they can also model $(\infty,n)$-categories for all $n \in [0,\infty]$.

\emph{Strict} complicial sets were first defined by  Roberts \cite{roberts} with the intention of constructing a simplicial model of strict $\omega$-categories. He conjectured that it should be possible to extend the classical nerve to define an equivalence from the category of strict $\omega$-categories to the category of strict complicial sets.  Street defined this nerve \cite{street}, providing a fully precise statement of what is known as the Street--Roberts conjecture, appearing as Theorem \ref{thm:street-roberts}. Verity proved the Street--Roberts conjecture \cite{verity-complicial} and then subsequently defined and developed the theory of the weak variety complicial sets  \cite{verity-weak-i, verity-weak-ii} that is the focus here.

We begin in \S\ref{sec:quasi} by revisiting how a quasi-category (an unmarked simplicial set) models an $(\infty,1)$-category. This discussion enables us to  explore what would be needed to model an $(\infty,2)$-category as a simplicial set  in \S\ref{sec:level-2}. These excursions motivate the definition of stratified simplicial sets in \S\ref{sec:strat} and then complicial sets in \S\ref{sec:complicial}. We conclude in \S\ref{sec:n-trivial} by defining $n$-\emph{trivial} complicial sets, which, like  $(\infty,n)$-categories, have non-invertible simplices concentrated in low dimensions.

We assume the reader has some basic familiarity with the combinatorics of simplicial sets and adopt relatively standard notations, e.g., $\Delta[n]$ for the standard $n$-simplex and $\Lambda^k[n]$ for the horn formed by those faces that contain the $k$th vertex.

\subsection{Quasi-categories as $(\infty,1)$-categories}\label{sec:quasi}

The most popular model for $(\infty,1)$-categories were first introduced by Michael Boardman and Rainer Vogt under the name \emph{weak Kan complexes} \cite{BV}.

\begin{definition} A \textbf{quasi-category} is a simplicial set $A$ so that every inner horn admits a filler
\[
  \raisebox{-0.5\height}{\includegraphics{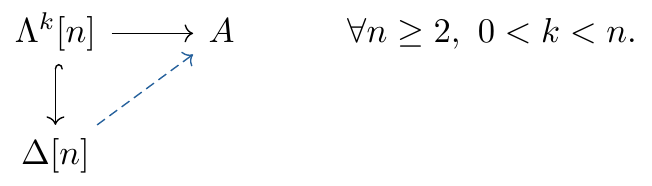}}
\]
\end{definition}

This presents an $(\infty,1)$-category with:
\begin{itemize}
\item $A_0$ as the set of objects; 
\item $A_1$ as the set of 1-cells with sources and targets determined by the face maps
\[ 
\raisebox{-0.5\height}{\includegraphics{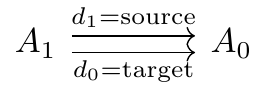}}
\]
and degenerate 1-simplices serving as identities; 
\item $A_2$ as the set of 2-cells; 
\item $A_3$ as the set of 3-cells, and so on.
\end{itemize}

The weak 1-category structure arises as follows.  A 2-simplex
\begin{equation}\label{eq:gen-2-simplex}
\raisebox{-0.5\height}{\includegraphics{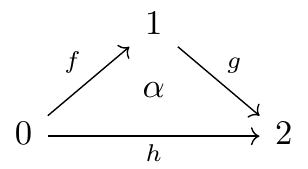}}
\end{equation}
provides a witness that $h \simeq gf$. 

\begin{notation}
We adopt the convention throughout of always labeling the vertices of an $n$-simplex by $0,\ldots, n$ to help orient each picture. This notation does not assert that the vertices are necessarily distinct.
\end{notation}

A 3-simplex then
\[
\raisebox{-0.5\height}{\includegraphics{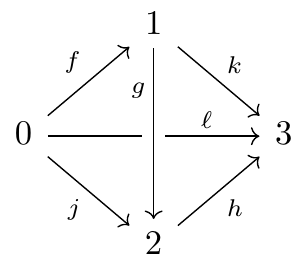}}
\]
provides witnesses that $h(gf) \simeq hj \simeq \ell \simeq kf \simeq (hg)f$.

The \textbf{homotopy category} of a quasi-category $A$ is the category whose objects are vertices and whose morphisms are a quotient of $A_1$ modulo the relation $f \simeq g$ that identifies a pair of parallel edges if and only if there exists a 2-simplex.
\begin{equation}\label{eq:f-sim-g}
\raisebox{-0.5\height}{\includegraphics{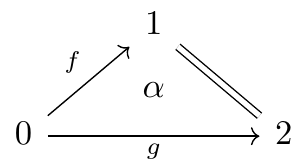}}
\end{equation}

\begin{notation}
Here and elsewhere the notation ``$=$'' is used for degenerate simplices.
\end{notation}

\begin{exercise}Formulate alternate versions of the relation $f \simeq g$ and prove that in a quasi-category each of these relations defines an equivalence relation and furthermore that these relations are all equivalent.
\end{exercise}

The composition operation witnessed by 2-simplices is not unique on the nose but it is unique up to the notion of homotopy just introduced.

\begin{exercise} Prove that the homotopy category is a strict 1-category.
\end{exercise}

A quasi-category is understood as presenting an $(\infty,1)$-category rather than an $(\infty,\infty)$-category because each 2-simplex is invertible up to a 3-simplex, and each 3-simplex is invertible up to a 4-simplex, and so on, in a sense we  now illustrate. First consider a 2-simplex as in \eqref{eq:f-sim-g}. This data can be used to define a horn $\Lambda^1[3] \to A$ whose other two faces are degenerate
\[
\raisebox{-0.5\height}{\includegraphics{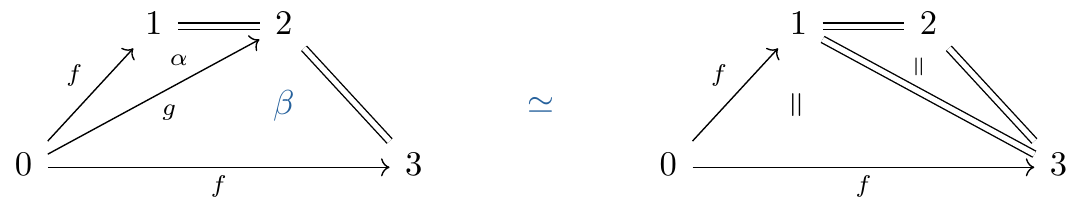}}
\]
which can be filled to define a ``right inverse'' $\beta$ in the sense that this pair of 2-cells bound a 3-simplex with other faces degenerate. Similarly, there is a ``special outer horn''\footnote{Special outer horns $\Lambda^0[n] \to A$ and $\Lambda^n[n]\to A$ have first or last edges mapping to 1-\emph{equivalences} (such as degeneracies) in $A$, as introduced in Definition \ref{defn:1-equivalence} below.} $\Lambda^3[3] \to A$
\[
\raisebox{-0.5\height}{\includegraphics{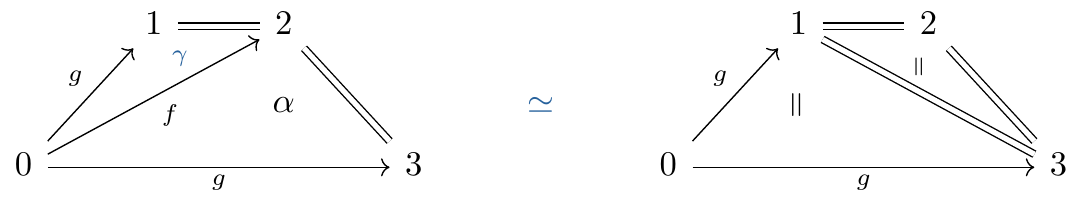}}
\]
which can be filled to define a ``left inverse'' $\gamma$. In this sense, $\alpha$ is an \emph{equivalence} up to 3-simplices, admitting left and right inverses along the boundary of a pair of three simplices.

This demonstrates that 2-simplices with a degenerate outer edge admit left and right inverses, but what if $\alpha$ has the form \eqref{eq:gen-2-simplex}? In this case, we can define a $\Lambda^1[3] \to A$ horn
\[
\raisebox{-0.5\height}{\includegraphics{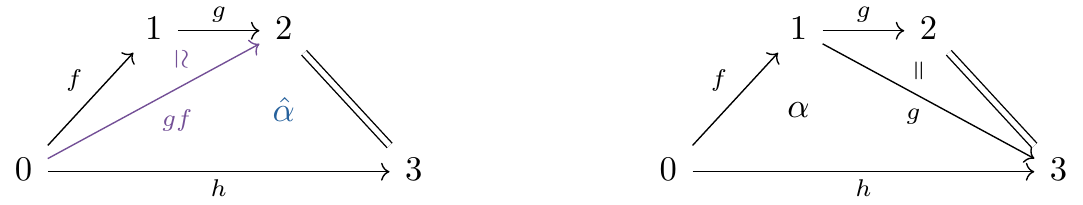}}
\]
whose 3rd face is constructed by filling a horn $\Lambda^1[2] \to A$. In this sense, any 2-simplex is equivalent to one with last (or dually first) edge degenerate. 

\begin{exercise} Generalize this argument to show that the higher-dimensional simplices in a quasi-category are also weakly invertible.
\end{exercise}

\subsection{Towards a simplicial model of $(\infty,2)$-categories}\label{sec:level-2}

Having seen how a simplicial set may be used to model an $(\infty,1)$-category, it is natural to ask how a simplicial set might model an $(\infty,2)$-category. A reasonable idea would be interpret the 2-simplices as inhabited by not necessarily invertible 2-cells pointing in a consistent direction. The problem with this is 
 that the 2-simplices need to play a dual role: they must also witness composition of 1-simplices, in which case it does not make sense to think of them as inhabited by non-invertible cells. The idea is to mark as ``thin'' the witnesses for composition and then demand that these marked 2-simplices behave as 2-dimensional equivalences in a sense that can be intuited from the preceding three diagrams.
 
Then 3-simplices can be thought of as witnesses for composition of not-necessarily thin 2-simplices. For instance, given a pair of 2-simplices $\alpha$ and $\beta$ with boundary as displayed below, the idea is to build a $\Lambda^2[3]$-horn
\[
\raisebox{-0.5\height}{\includegraphics{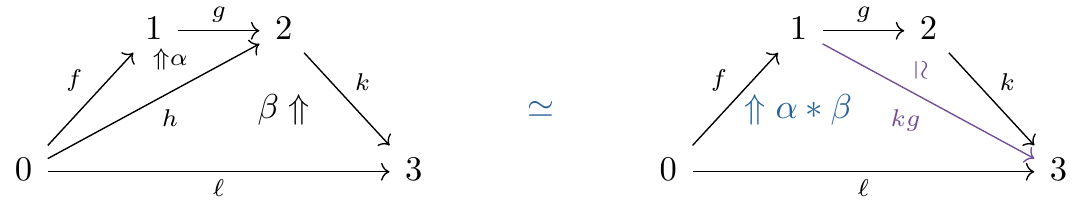}}
\]
whose 0th face is a thin filler of the $\Lambda^1[2]$-horn formed by $g$ and $k$. The 2nd face, defined by filling the horn $\Lambda^2[3]$-horn, defines a composite 2-simplex, which is witnessed by the (thin) 3-simplex. Note that because the 0th face is thin, its 1st edge is interpreted as a composite $kg$ of $g$ and $k$, which is needed so that the boundary of the new 2-cell agrees with the boundary of the pasted composite of $\beta$ and $\alpha$. Since the 3-simplex should be thought of as a witness to a composition relation involving the 2-simplices that make up its boundary, the three simplex should also be regarded as ``thin.''

A similar $\Lambda^1[3]$-horn  can be used to define composites where the domain of $\alpha$ is the last, rather than the first, edge of the codomain of $\beta$. It is in this way that simplicial sets with certain marked simplices are used to model $(\infty,2)$-categories or indeed $(\infty,n)$-categories for any $n \in [0,\infty]$. We now formally introduce \emph{stratified simplicial sets} before stating the axioms that define these \emph{complicial sets}.

\subsection{Stratified simplicial sets}\label{sec:strat}

We have seen that for a simplicial set to model an infinite-dimensional category with non-invertible morphisms in each dimension, it should have a distinguished set of ``thin'' $n$-simplices witnessing composition of $(n-1)$-simplices. Degenerate simplices are always thin in this sense. Furthermore, the intuition that the ``thin'' simplices are the equivalences, in a sense that is made precise in \S\ref{part:saturation}, suggests that certain 1-simplices might also be marked as thin. This motivates the following definition:

\begin{definition} A \textbf{stratified simplicial set} is a simplicial set with a designated subset of \textbf{marked} or \textbf{thin} positive-dimensional simplices that includes all degenerate simplices. A map of stratified sets is a simplicial map that preserves thinness.
\end{definition}

\begin{notation} The symbol ``$\simeq$'' is used throughout to decorate thin simplices.
\end{notation}

There are left and right adjoints
\[
\raisebox{-0.5\height}{\includegraphics{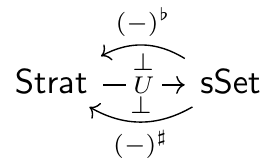}}
\]
to the forgetful functor from stratified simplicial sets to ordinary simplicial sets, both of which are full and faithful. The left adjoint assigns a simplicial set its \textbf{minimal stratification}, with only degenerate simplices marked, while the right adjoint assigns the \emph{maximal stratification}, marking all simplices. When a simplicial set is regarded as a stratified simplicial set, the default convention is to assign the minimal stratification, with the notation ``$(-)^\flat$'' typically omitted.

\begin{definition} An inclusion $U \hookrightarrow V$ of stratified simplicial sets is:
\begin{itemize}
\item \textbf{regular}, denoted $U \hookrightarrow_rV$, if thin simplices in $U$ are created in $V$ (a simplex is thin in $U$ if and only if its image in $V$ is thin); and 
\item \textbf{entire}, denoted $U\hookrightarrow_eV$, if the map is the identity on underlying simplicial sets (in which case the only difference between $U$ and $V$ is that more simplices are marked in $V$).
\end{itemize}
\end{definition}

A standard inductive argument, left to the reader, proves:

\begin{proposition}\label{prop:monos}
The monomorphisms in $\cat{Strat}$ are generated under pushout and transfinite composition by 
\[\{ \partial\Delta[n]\hookrightarrow_r\Delta[n] \mid n \geq 0\} \cup \{ \Delta[n]\hookrightarrow_e\Delta[n]_t \mid n \geq 1\},\]
where the top-dimensional $n$-simplex in $\Delta[n]_t$ is thin.
\end{proposition}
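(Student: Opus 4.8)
The plan is to run the classical relative-skeleton cell-attachment argument for monomorphisms of simplicial sets, factored so that the two families of generators carry out two separate jobs: the regular boundary inclusions build up the underlying simplicial set, while the entire inclusions install the extra markings. Throughout I will use that the underlying simplicial set of a pushout in $\cat{Strat}$ is computed as the pushout of underlying simplicial sets, with a simplex marked in the pushout precisely when it is the image of a marked simplex from one of the two factors; that monomorphisms of simplicial sets preserve nondegeneracy; and the standard relative skeletal filtration $U = \mathrm{sk}_{-1} \subseteq \mathrm{sk}_0 \subseteq \cdots$ of a monomorphism $U \hookrightarrow V$, where $\mathrm{sk}_n = U \cup \mathrm{sk}_n V$ inside $V$.

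First I would factor an arbitrary monomorphism $U \hookrightarrow V$ in $\cat{Strat}$ as a regular inclusion followed by an entire one. Define $V^-$ to have the underlying simplicial set of $V$, with a simplex marked exactly when it is degenerate or lies in the image of a marked simplex of $U$. This is a legitimate stratification, and the inclusion $U \hookrightarrow_r V^-$ is regular: a simplex $x$ of $U$ is marked in $V^-$ if and only if it is marked in $U$, using injectivity to identify any marked preimage with $x$ itself and using that monos preserve nondegeneracy so that a nondegenerate unmarked simplex of $U$ cannot become degenerate, hence remains unmarked, in $V^-$. By construction $V^- \hookrightarrow_e V$ is entire, so it suffices to treat the two factors separately.

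For the regular factor $U \hookrightarrow_r V^-$, run the relative skeletal filtration. Passing from stage $n-1$ to stage $n$ attaches the nondegenerate $n$-simplices of $V$ not already in $U$; since the boundary of each such simplex lies in the previous stage, this step is realized as a pushout of a coproduct $\coprod \partial\Delta[n] \hookrightarrow_r \coprod \Delta[n]$, and such a coproduct of pushouts of a single generator is itself a transfinite composite of pushouts of that generator (attach one cell at a time). The marking bookkeeping comes out correctly because each newly attached top-dimensional simplex is nondegenerate and not from $U$, hence unmarked in $V^-$, matching the minimally stratified target $\Delta[n]$ of the generator; the boundary faces retain the markings they already carry, and the new degeneracies are automatically marked on both sides. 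The transfinite composite of these stages recovers $V^-$ with exactly the specified markings.

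For the entire factor $V^- \hookrightarrow_e V$, I would install the remaining markings one at a time. Each nondegenerate simplex that is thin in $V$ but not in $V^-$ is classified by a map $\Delta[n] \to$ the current stage out of the minimally stratified $n$-simplex (a map which is automatically stratified, and which has $n \geq 1$ since vertices are never thin); taking the pushout of $\Delta[n] \hookrightarrow_e \Delta[n]_t$ along it marks exactly that simplex and alters nothing else, since the only nondegenerate simplex newly thin in $\Delta[n]_t$ is the top one. A transfinite composite over all such simplices produces the entire inclusion $V^- \hookrightarrow_e V$, completing the factorization of $U \hookrightarrow V$ into pushouts and transfinite composites of the two generating families. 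The one genuine subtlety beyond the classical unmarked argument is precisely this separation of roles: the regular inclusions can only ever produce the default unmarked stratification on new nondegenerate cells, so every nondegenerate marking must be deferred to the entire stage, and getting the intermediate object $V^-$ to arrange this cleanly is the crux of the argument.
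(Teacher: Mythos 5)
Your proof is correct. The paper gives no proof, describing it only as ``a standard inductive argument, left to the reader,'' and your argument---factoring the monomorphism as a regular inclusion followed by an entire one via the intermediate object $V^-$, building the regular part by the relative skeletal filtration with $\partial\Delta[n]\hookrightarrow_r\Delta[n]$, and then installing the remaining markings one at a time with $\Delta[n]\hookrightarrow_e\Delta[n]_t$---is precisely that standard argument, with the relevant subtleties (monomorphisms preserve nondegeneracy, how pushout stratifications are computed, $n\geq 1$ for markings) correctly handled.
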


\begin{exercise} Prove this.
\end{exercise}

\subsection{Complicial sets}\label{sec:complicial}

A stratified simplicial set is a simplicial set with enough structure to talk about composition of simplices. A complicial set is a stratified simplicial set in which composites exist and in which thin witnesses to composition compose to thin simplices, an associativity condition that will also play a role in establishing their equivalence-like nature. The following form of the definition of a (weak) \emph{complicial set}, due to Verity \cite{verity-weak-i}, modifies an earlier equivalent presentation due to Street \cite{street}. Verity's modification focuses on a particular set of $k$-\emph{admissible} $n$-\emph{simplices}, thin $n$-simplices that witness that the $k$th face is a composite of the $(k+1)$th and $(k-1)$th simplices.

\begin{definition}[$k$-admissible $n$-simplex]
The $k$-\textbf{admissible} $n$-\textbf{simplex} is an entire superset of the standard $n$-simplex with certain additional faces marked thin: a non-degenerate $m$-simplex in $\Delta^k[n]$ is thin if and only if it contains the vertices $\{k-1,k,k+1\} \cap [n]$. Thin faces include:
\begin{itemize}
\item the top dimensional $n$-simplex 
\item all codimension-one faces except for the $(k-1)$th, $k$th, and $(k+1)$th
\item the 2-simplex spanned by $[k-1,k, k+1]$ in the case of inner horns or the edge $[k-1,k,k+1] \cap [n]$ in the case of outer horns.
\end{itemize}
\end{definition}

\begin{definition}\label{defn:complicial}
A \textbf{complicial set} is a stratified simplicial set that admits extensions along the \textbf{elementary anodyne extensions}, which are generated under pushout and transfinite composition by the following two sets of maps:
\begin{enumerate}
\item The \textbf{complicial horn extensions} 
\[\Lambda^k[n]\hookrightarrow_r\Delta^k[n]\quad \mathrm{for} \quad n \geq 1,\ 0 \leq k \leq n\] are regular inclusions of $k$-\textbf{admissible $n$-horns}. An inner admissible $n$-horn parametrizes ``admissible composition'' of a pair of $(n-1)$-simplices. The extension defines a composite $(n-1)$-simplex together with a thin $n$-simplex witness.
\begin{equation}\label{eq:complicial-comp}
\raisebox{-0.5\height}{\includegraphics{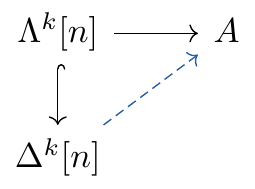}}
\end{equation}
\item The \textbf{complicial thinness extensions} \[\Delta^k[n]' \hookrightarrow_e \Delta^k[n]''\quad \mathrm{for}\quad n \geq 1,\ 0 \leq k \leq n,\] are entire inclusions of two entire supersets of $\Delta^k[n]$. The stratified simplicial set $\Delta^k[n]'$ is obtained from $\Delta^k[n]$ by also marking the $(k-1)$th and $(k+1)$th faces, while $\Delta^k[n]''$ has all codimension-one faces marked. This extension problem
\begin{equation}\label{eq:complicial-thin}
\raisebox{-0.5\height}{\includegraphics{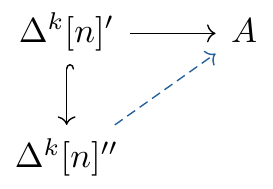}}
\end{equation}
 demands that whenever the composable pair of simplices in an admissible horn are thin, then so is any composite.
\end{enumerate}
\end{definition}

\begin{definition}
A \textbf{strict complicial set} is a stratified simplicial set that admits unique extensions along the elementary anodyne extensions \eqref{eq:complicial-comp} and \eqref{eq:complicial-thin}.
\end{definition}

\begin{example}[complicial horn extensions]
To gain familiarity with the elementary anodyne extensions, let us draw  the complicial horn extensions in low dimensions, using cyan to depict simplices present in the codomain but not the domain and ``$\simeq$'' to decorate thin simplices. The labels on the simplices are used to suggest the interpretation of certain data as composites of other data, but recall that in a (non-strict) complicial set there is no single simplex designated as \emph{the} composite of an admissible pair of simplices. Rather, the fillers for the complicial horn extensions provide \emph{a} composite and a witness to that relation.
\[
\bullet \quad \Lambda^1[2]\hookrightarrow_r\Delta^1[2]\qquad
\vcenter{
\raisebox{-0.5\height}{\includegraphics{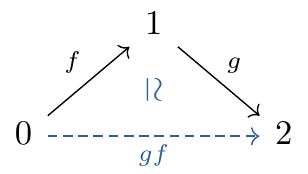}}
}
\]
\[
\bullet \quad \Lambda^0[2]\hookrightarrow_r\Delta^0[2]\qquad
\vcenter{
\raisebox{-0.5\height}{\includegraphics{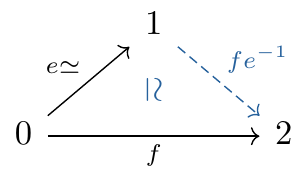}}
}
\]
$\bullet\quad\Lambda^2[3]\hookrightarrow_r\Delta^2[3]$
\[
\vcenter{
\raisebox{-0.5\height}{\includegraphics{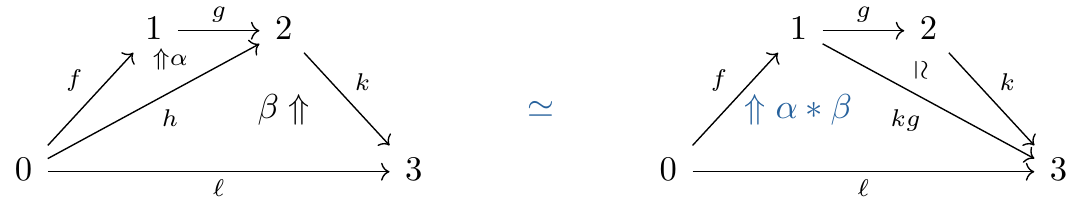}}
}
\]
$\bullet\quad\Lambda^0[3]\hookrightarrow_r\Delta^0[3]$
\[
\vcenter{
\raisebox{-0.5\height}{\includegraphics{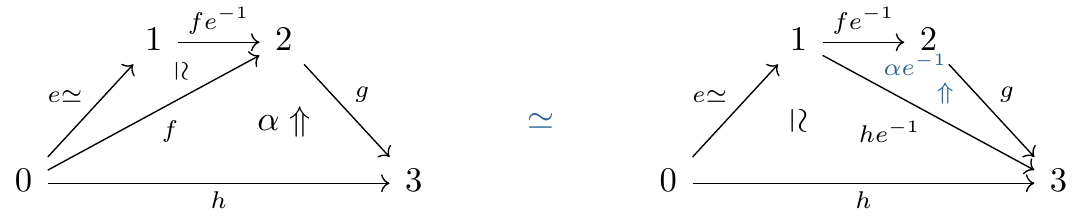}}
}
\]
$\bullet\quad$ For $\Lambda^2[4]\hookrightarrow_r\Delta^2[4]$ the non-thin codimension-one faces in the horn define the two 3-simplices with a common face displayed on the left, while their composite is a 3-simplex as displayed on the right.
\[
\raisebox{-0.5\height}{\includegraphics{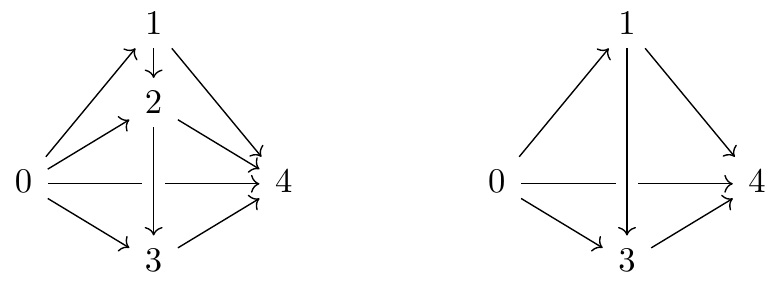}}
\]

It makes sense to interpret the right hand simplex, the 2nd face of the 2-admissible 4-simplex, as a composite of the 3rd and 1st faces because the 2-simplex
\[
\raisebox{-0.5\height}{\includegraphics{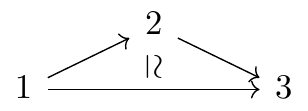}}
\]
is thin.
\end{example}

\subsection{$n$-trivialization and the $n$-core}\label{sec:n-trivial}

We  now introduce the complicial analog of the condition that an $(\infty,\infty)$-category is actually an $(\infty,n)$-category, in which each $r$-cell with $r > n$ is weakly invertible. 

\begin{definition} A stratified simplicial set $X$ is \textbf{$n$-trivial} if all $r$-simplices are marked for $r > n$. 
\end{definition}

The full subcategory of $n$-trivial stratified simplicial sets is reflective and coreflective
\[
\raisebox{-0.5\height}{\includegraphics{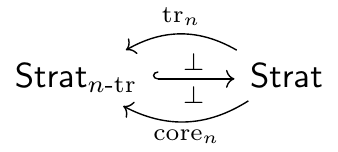}}
\]
in the category of stratified simplicial sets. That is $n$-\textbf{trivialization} defines an idempotent monad on $\cat{Strat}$ with unit the entire inclusion
\[ X \hookrightarrow_e \mathrm{tr}_nX\] of a stratified simplicial set $X$ into the stratified simplicial set $\mathrm{tr}_nX$ with the same marked simplices in dimensions $1,\ldots, n$, and with all higher simplices ``made thin.'' A complicial set is $n$-\textbf{trivial} if this map is an isomorphism.

The $n$-\textbf{core} $\mathrm{core}_nX$, defined by restricting to those simplices whose faces above dimension $n$ are all thin in $X$, defines an idempotent comonad with counit the regular inclusion
\[ \mathrm{core}_nX \hookrightarrow_r X.\] Again, a complicial set is $n$-trivial just when this map is an equivalence. As is always the case for a monad-comonad pair arising in this way, these functors are adjoints: $\mathrm{tr}_n \dashv \mathrm{core}_n$.

The subcategories of $n$-trivial stratified simplicial sets assemble to define a string of inclusions with adjoints
\[
\raisebox{-0.5\height}{\includegraphics{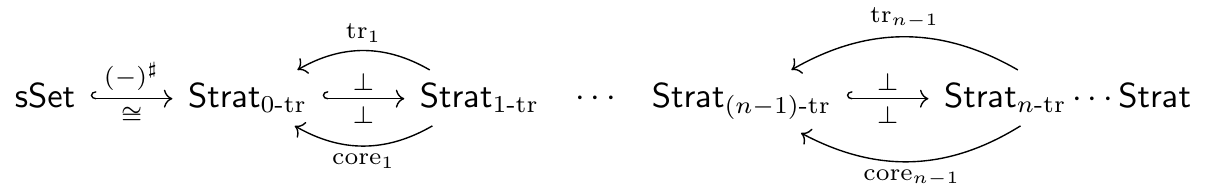}}
\]
that filter the inclusion of simplicial sets, considered as maximally marked stratified simplicial sets, into the category of all stratified simplicial sets.

\begin{exercise}
Show that the two right adjoints restrict to complicial sets to define functors that model the inclusion of $(\infty,n-1)$-categories into $(\infty,n)$-categories and its right adjoint, which takes an $(\infty,n)$-category to the ``groupoid core.'' 
\end{exercise}

\begin{remark} By contrast, the left adjoint, which just marks things arbitrarily, does not preserve complicial structure; this construction is too naive to define the ``freely invert $n$-arrows'' functor from $(\infty,n)$-categories to $(\infty,n-1)$-categories.\footnote{For instance, if $A$ is a naturally marked quasi-category, that's 1-trivial, then its zero trivialization is not a Kan complex (because we have not changed the underlying simplicial set) but it's groupoid core is (by a theorem of Joyal).}
\end{remark}

\begin{exercise} Show that a 0-trivial complicial set is exactly a Kan complex with the maximal ``$(-)^\sharp$'' marking.
\end{exercise}

\begin{exercise}
Prove that the underlying simplicial set of any 1-trivial complicial set is a quasi-category.
\end{exercise}

Conversely, any quasi-category admits a stratification making it a complicial set. The markings on the 1-simplices cannot be arbitrarily assigned. At minimum, certain automorphisms (endo-simplices that are homotopic to identities) must be marked. More to the point, each edge that is marked necessarily defines an equivalence in the quasi-category. But it is not necessary to mark all of the equivalences.

\begin{example} Strict $n$-categories define $n$-trivial \emph{strict} complicial sets, with unique fillers for the admissible horns, via the Street nerve, which is the subject of the next section.
\end{example}

In the third part of these notes, we argue that the complicial sets that most closely model $(\infty,n)$-categories are the $n$-trivial \emph{saturated} complicial sets, in which all \emph{equivalences} are marked.  In the case of an $n$-trivial stratification, the equivalences are canonically determined by the structure of the simplicial set.  One bit of evidence for the importance of the notion of saturation discussed below is the fact that the category of quasi-categories is isomorphic to the category of saturated 1-trivial complicial sets (Example \ref{ex:q-cat-natural}).

\section{The Street nerve of an $\omega$-category}\label{part:nerve}

The \emph{Street nerve} is a functor \[N \colon \vcat{\omega} \to \cat{sSet}\] from strict $\omega$-categories to simplicial sets. As is always the case for nerve constructions, the Street nerve  is determined by a functor
\[\mathcal{O}\colon \DDelta \to \vcat{\omega}.\] 
In this case, the image of $[n] \in \DDelta$ is the $n$th \emph{oriental} $\mathcal{O}_n$, a strict $n$-category  defined by Street \cite{street}. The nerve of a strict $\omega$-category $C$ is then defined to be the simplicial set whose $n$-simplices \[ NC_n :=\hom(\mathcal{O}_n,C)\] are $\omega$-functors $\mathcal{O}_n \to C$.  There are various ways to define a stratification on the nerve of an $\omega$-category, defining a lift of the Street nerve to a functor valued in stratified simplicial sets. One of these marking conventions turns Street nerves of strict $\omega$-categories into strict complicial sets, and indeed all strict complicial sets arise in this way. This is the content of the  Street--Roberts conjecture, proven by Verity, which motivated the definition of strict complicial sets.

\begin{theorem}[Verity]\label{thm:street-roberts}
The Street nerve defines a fully faithful embedding
\[ 
\raisebox{-0.5\height}{\includegraphics{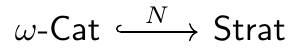}}
\] of $\omega$-categories into stratified simplicial sets, where an $n$-simplex $\mathcal{O}_n \to C$ in $NC$ is marked if and only if it carries the top dimensional $n$-cell on $\mathcal{O}_n$ to an identity in $C$. The essential image is the category of strict complicial sets.
\end{theorem}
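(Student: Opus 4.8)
The plan is to prove the theorem in three largely independent pieces: full faithfulness of the nerve functor $N$, the claim that every Street nerve (with the stated marking) is a strict complicial set, and the converse that every strict complicial set arises this way. For full faithfulness, I would exploit the standard nerve formalism: since $N$ is defined by the cosimplicial object $\mathcal{O}\colon\DDelta\to\vcat{\omega}$, an $\omega$-functor $C\to D$ induces a map of nerves, and I would show the correspondence $\hom_{\vcat{\omega}}(C,D)\cong\hom_{\cat{Strat}}(NC,ND)$ is a bijection. The key input is that the orientals are \emph{dense} in $\vcat{\omega}$, so that every $\omega$-category is canonically a colimit of orientals; one then checks that a map of stratified nerves is forced to respect this presentation, and the marking condition (top $n$-cell sent to an identity) is exactly what pins down the action on cells so that no extra simplicial maps sneak in beyond the genuine $\omega$-functors. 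First I would verify on the generating data---objects, $1$-cells, and the generating $2$-cells---that a stratified map determines an $\omega$-functor, then extend by induction on dimension using the pasting structure of $\mathcal{O}_n$.

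For the second piece, I would show $NC$ admits (unique) extensions along the elementary anodyne extensions of Definition \ref{defn:complicial}. This reduces, by Proposition-style generation under pushout and transfinite composition, to solving the lifting problems against the generators $\Lambda^k[n]\hookrightarrow_r\Delta^k[n]$ and $\Delta^k[n]'\hookrightarrow_e\Delta^k[n]''$. By adjointness (the nerve formula $NC_n=\hom(\mathcal{O}_n,C)$), each such lifting problem translates into a statement about $\omega$-functors out of the orientals: filling a $k$-admissible horn corresponds to the existence and uniqueness of a certain composite cell in the strict $\omega$-category $C$, governed by the pasting diagram presented by $\mathcal{O}_n$. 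The thinness/marking extensions then correspond to the fact that composites of identities are identities. The heart here is a careful analysis of the combinatorics of $\mathcal{O}_n$: one must identify the sub-$\omega$-category picked out by the admissible horn $\Lambda^k[n]$ and confirm that $\mathcal{O}_n$ is freely generated from it by adjoining precisely one composite cell, so that extensions exist and are unique in a strict $\omega$-category.

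For the converse, given a strict complicial set $X$ I would reconstruct an $\omega$-category $C$ with $NC\cong X$. The objects of $C$ are the vertices $X_0$, and the $n$-cells are extracted as equivalence classes of (or canonical representatives among) the non-thin $n$-simplices, with source and target determined by appropriate faces; composition is \emph{defined} using the unique fillers guaranteed by the strict complicial horn extensions. One verifies the $\omega$-category axioms---associativity, unit laws, interchange---by invoking uniqueness of fillers to reduce each axiom to an equality of simplices produced by two admissible horn-filling procedures that share a common higher-dimensional witness. Finally I would check $NC\cong X$ as stratified simplicial sets, with the marking matching (thin simplices correspond to cells sent to identities), and that this assignment is inverse to $N$ on essential images.

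The main obstacle will be the combinatorial analysis of the orientals underlying both the complicial-horn-filling step and the reconstruction functor: establishing that each $k$-admissible horn inclusion corresponds, under $\hom(\mathcal{O}_\bullet,C)$, to a free adjunction of a single composite cell in $\mathcal{O}_n$ requires an explicit understanding of Street's pasting presentation of the orientals and a verification that the admissibility marking isolates exactly the right composable configuration. This is precisely the hard technical core that Verity handles in \cite{verity-complicial}, and a complete treatment would demand a full development of the orientals that lies beyond the scope of these notes.
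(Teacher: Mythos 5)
The paper offers no proof of this statement to compare against: it is recorded as Verity's resolution of the Street--Roberts conjecture, with the argument explicitly deferred to \cite{verity-complicial} (``we do not have the space to dive into proof of this result here''). Your three-part decomposition---full faithfulness, nerves of $\omega$-categories are strict complicial sets, every strict complicial set is such a nerve---is the right shape, and you correctly locate the difficulty in the combinatorics of the orientals. But the proposal is a roadmap rather than a proof, and two of the steps you treat as reductions contain the actual content.

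First, the appeal to density of the orientals in $\vcat{\omega}$ is essentially circular: density of the cosimplicial object $\mathcal{O}\colon\DDelta\to\vcat{\omega}$ is more or less equivalent to full faithfulness of the nerve, so you are invoking a statement of the same strength as the one to be proved, and it requires the same hard analysis of pasting in $\mathcal{O}_n$. Second, the reconstruction in your third part is too naive as stated: the $n$-cells of the $\omega$-category associated to a strict complicial set $X$ cannot be taken to be the non-thin $n$-simplices---identity cells must arise from thin simplices, and the source and target of a putative $n$-cell are pasted composites of lower-dimensional cells that are represented by simplices only after choosing horn fillers, so well-definedness of $s_k$, $t_k$, $*_k$ and the verification of interchange do not reduce to ``uniqueness of fillers'' in any immediate way. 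Verity instead constructs a left adjoint $F\colon\cat{Strat}\to\vcat{\omega}$ to the stratified nerve and shows, by a delicate inductive decomposition of the simplices of a strict complicial set, that the adjunction restricts to an equivalence on the strict complicial sets. Your concluding paragraph concedes that this core lies beyond the scope of the notes; that concession is accurate, but it means the proposal should be read as a plausible outline of Verity's strategy, not as a proof.
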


In \S\ref{sec:omega-cat}, we introduce strict $\omega$-categories, and then in \S\ref{sec:oriental} we introduce the orientals. In \S\ref{sec:nerve}, we then define the Street nerve and revisit the Street--Roberts conjecture, though we leave the details of its proof to \cite{verity-complicial}. At the conclusion of this section, we look ahead to  \S\ref{sec:weak-from-strict}, which explores other marking conventions for Street nerves of strict $n$-categories. In this way, the Street nerve provides an important source of examples of weak, as well as strict, complicial sets. These are obtained by marking the equivalences and not just the identities in $NC$, the consideration of which  leads naturally to the notion of saturation in a complicial set, which is a main topic for the final section of these notes.

\subsection{$\omega$-categories}\label{sec:omega-cat}

Street's ``The algebra of oriented simplexes'' \cite{street} gives a single-sorted definition of a (strict) $n$-category in all dimensions $n=1,\ldots, \omega$. In the single-sorted definition of a 1-category, an object is identified with its identity morphism, and these 0-cells are recognized among the set of 1-cells as the fixed points for the source and target maps.

\begin{definition} A 1-\textbf{category} $(C,s,t,*)$ consists of 
\begin{itemize}
\item a set $C$ of \textbf{cells}
\item functions $s,t \colon C \rightrightarrows C$ so that $ss = ts =s$ and $tt = st = t$ (a target or source has itself as its target and its source).
\item a function $* \colon C \times_C C \to C$ from the pullback of $s$ along $t$ to $C$ so that $s(a * b) = s(b)$ and $t(a * b) = t(a)$ (the source of a composite is the source of its first cell and the target is the target of the second cell).
\end{itemize}
and so that 
\begin{itemize} 
\item $s(a) = t(v) = v$ implies $a *v = a$ (right identity)
\item $u = s(u) = t(a)$ implies $u * a = a$ (left identity)
\item $s(a) = t(b)$ and $s(b) = t(c)$ imply $a * (b * c) = (a * b) *c$ (associativity).
\end{itemize}
The \textbf{objects} or 0-\textbf{cells} are the fixed points for $s$ and then also for $t$ and conversely.
\end{definition}

\begin{definition} A 2-\textbf{category} $(C,s_0, t_0, *_0, s_1, t_1, *_1)$ consists of two 1-categories \[(C,s_0,t_0,*_0)\quad \mathrm{and} \quad (C,s_1, t_1, *_1)\] so that
\begin{itemize}
\item $s_1 s_0 = s_0 = s_0 s_1 = s_0 t_1$, $t_0 = t_0 s_1 = t_0 t_1$ (globularity plus 1-sources and 1-targets of points are points)
\item $s_0(a) = t_0(b)$ implies $s_1(a *_0 b ) = s_1(a) *_0 s_1 (b)$ and $t_1(a *_0 b) = t_1(a) *_0 t_1(b)$  (1-cell boundaries of horizontal composites are composites).
\item $s_1 (a) = t_1(b)$ and $s_1 (a') = t_1 (b')$ and $s_0(a) = t_0 (a')$ imply  that
\[ (a *_1 b) *_0 (a' *_1 b') = (a*_0 a') *_1 (b*_0 b')\] (middle four interchange).
\end{itemize}
Identities for $*_0$ are $0$-\textbf{cells} and identities for $*_1$ are $1$-\textbf{cells}.
\end{definition}

\begin{definition} An $\omega^+$-\textbf{category}\footnote{Street called these ``$\omega$-categories'' but we  reserve this term for something else.} consists of 1-categories $(C, s_n,t_n, *_n)$ for each $n \in \omega$ so that $(C, s_m, t_m, *_m, s_n, t_n, *_n)$ is a 2-category for each $m < n$. The identities for $*_n$ are $n$-\textbf{cells}.  An $\omega^+$-\textbf{functor} is a function that preserves sources, targets, and composition for each $n$.
\end{definition}

An $\omega$-\textbf{category} is an $\omega^+$-category in which every element is a \textbf{cell}, an $n$-cell for some $n$. Every $\omega^+$-category has a maximal sub $\omega$-category of cells and all of the constructions described here restrict to $\omega$-categories. 

An $n$-\textbf{category} is an $\omega$-category comprised of only $n$-cells. This means that the 1-category structures $(C, s_m, t_m, *_m)$ for $m > n$ are all discrete.

\begin{example} The underlying set functor $\omega^+$-$\cat{Cat} \to \cat{Set}$ is represented by the \textbf{free $\omega^+$-category $2_\omega$ on one generator}\footnote{In personal communication, Ross suggests that there may be something wrong with this example, but I do not see what it is.}, whose underlying set is
\[ (2 \times \omega) \cup \{ \omega\}.\]
The element $\omega$ is the unique non-cell, while the objects $(0,n)$ and $(1,n)$ are $n$-cells, respectively the $n$-source and $n$-target of $\omega$:
\[s_n(\omega) = (0,n)\quad\mathrm{and}\quad t_n(\omega) = (1,n).\] An $m$-cell is necessarily its own $n$-source and $n$-target for $m \leq n$; thus: 
\[s_n(\epsilon,m) = t_n(\epsilon,m) = (\epsilon,m)\quad \mathrm{for}\ m \leq n,\]
while:
\[s_n(\epsilon,m) = (0,n)\quad \mathrm{and}\quad t_n(\epsilon,m) = (1,n)\quad \mathrm{for}\ n < m.\]
 
The identity laws dictate all of the composition relations, e.g.: 
\[ \omega \ast_n (0,n) = \omega = (1,n) \ast_n \omega.\]
\end{example}

Using $2_\omega$ one can define the \textbf{functor $\omega^+$-category} $[A,B]$ for two $\omega^+$-categories $A$ and $B$: elements are $\omega^+$-functors $A \times 2_\omega \to B$. 

\begin{exercise}
Work out  the rest of the definition of the $\omega^+$-category $[A,B]$ and prove that $\omega^+$-$\cat{Cat}$ is cartesian closed. 
\end{exercise}

\begin{theorem}[Street]
There is an equivalence of categories
\[ \vcat{(\vcat{\omega^+})} \xrightarrow{\simeq} \vcat{\omega^+}\] which restricts to define an equivalence
\[ \vcat{(\vcat{n})} \xrightarrow{\simeq}	\vcat{(1+n)}\] for each $n \in [0,\omega]$.\footnote{Recall that in ordinal arithmetic $1+\omega  = \omega$.}
\end{theorem}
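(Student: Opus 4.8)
The plan is to produce the equivalence by hand, building a pair of functors that are mutually inverse up to natural isomorphism. The cartesian closed structure on $\vcat{\omega^+}$ established in the preceding exercise makes $\vcat{\omega^+}$ a cartesian monoidal category with unit the terminal $\omega^+$-category $\1$, and enrichment is taken over this structure. The guiding idea is that an $(\vcat{\omega^+})$-enriched category is the same data as an $\omega^+$-category whose cells have been reorganized so that the objects become the $0$-cells and the cells of each hom-$\omega^+$-category become cells of one higher dimension lying over a fixed $0$-source and $0$-target. I would formalize this as a ``total category'' functor $\Phi\colon \vcat{(\vcat{\omega^+})}\to\vcat{\omega^+}$ --- which is the functor asserted to be an equivalence --- together with an explicit inverse $\Psi$.

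To define $\Phi$, let $\mathcal{C}$ be an enriched category with object set $\ob\mathcal{C}$, hom-objects $\mathcal{C}(a,b)$, composition $\omega^+$-functors $\circ\colon\mathcal{C}(b,c)\times\mathcal{C}(a,b)\to\mathcal{C}(a,c)$, and units $\mathrm{id}_a\in\mathcal{C}(a,a)$. I would take the underlying set of $\Phi(\mathcal{C})$ to be $\coprod_{a,b}\mathcal{C}(a,b)$ and, for a cell $x\in\mathcal{C}(a,b)$, set $s_0x=\mathrm{id}_a$ and $t_0x=\mathrm{id}_b$, and for $k\geq1$ set $s_kx=s^{\mathcal{C}(a,b)}_{k-1}x$ and $t_kx=t^{\mathcal{C}(a,b)}_{k-1}x$, with $*_0$ given by $\circ$ and each $*_k$ for $k\geq1$ given by the level-$(k-1)$ composition inside the hom in which the two cells lie. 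The fixed points of $s_0$ are then exactly the units $\mathrm{id}_a$, so the $0$-cells of $\Phi(\mathcal{C})$ recover $\ob\mathcal{C}$ and no ad hoc identification of objects with identity $1$-cells is needed. For the inverse, given an $\omega^+$-category $C$ I would let $\Psi(C)$ have the $0$-cells of $C$ as objects, hom-objects $\Psi(C)(a,b)=\{x\in C:s_0x=a,\ t_0x=b\}$ with all operations shifted down one dimension, enriched composition given by $*_0$, and units the objects themselves.

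The verification splits into independent bundles, each routine on its own. That every $(\Phi(\mathcal{C}),s_k,t_k,*_k)$ with $k\geq1$ is a $1$-category, together with the globularity and interchange relations among these for $k\geq1$, follows immediately from the $\omega^+$-category axioms of the homs after the downward shift; here one uses that $k$-composability for $k\geq1$ forces the two cells into a common hom. The $1$-category axioms for $(\Phi(\mathcal{C}),s_0,t_0,*_0)$ and the associated globularity relations are exactly the associativity and unit laws of enriched composition. The one genuinely structural point is the middle-four interchange relating $*_0$ to each $*_k$ with $k\geq1$: unwinding both sides, this identity is precisely the statement that $\circ$ is an $\omega^+$-functor, i.e.\ that it preserves the level-$(k-1)$ composition of the product $\omega^+$-category. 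The verification that $\Psi$ is well defined is dual; closure of each hom under $s_k,t_k,*_k$ and functoriality of enriched composition are read off from the two-category relations between $(s_0,t_0,*_0)$ and $(s_k,t_k,*_k)$ in $C$.

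It is then straightforward that $\Phi$ and $\Psi$ are mutually inverse: $\Phi\Psi(C)=C$, while $\Psi\Phi(\mathcal{C})$ has the units $\mathrm{id}_a$ as its objects and recovers $\mathcal{C}$ up to the canonical relabelling $a\mapsto\mathrm{id}_a$. For the refined statement I would check that the functors respect truncation. If every hom $\mathcal{C}(a,b)$ is an $n$-category, so that all of its structure above dimension $n$ is discrete, then the cells of the homs become cells of dimension between $1$ and $n+1$ in $\Phi(\mathcal{C})$; hence all structure of $\Phi(\mathcal{C})$ above dimension $n+1=1+n$ is discrete and $\Phi(\mathcal{C})$ is a $(1+n)$-category, and symmetrically $\Psi$ carries $(1+n)$-categories to $(\vcat{n})$-enriched categories. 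The case $n=\omega$ is included: the homs then have every element a cell, so the total category does too and is therefore an $\omega$-category, consistent with $1+\omega=\omega$. The main obstacle is not any single deep step but the sustained single-sorted bookkeeping: one must express ``shifting dimension up by one'' and ``restricting to a fixed $0$-dimensional boundary'' purely through the source, target, and composition operations, and then check that each globularity and interchange axiom transports correctly across the shift. Once the dictionary identifying the enriched composition functor with the interchange law between $*_0$ and $*_k$ is in hand, the remaining identities are mechanical.
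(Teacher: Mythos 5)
Your construction is exactly the one the paper sketches: the ``total category'' functor sending an enriched category to the disjoint union of its homs with all operations shifted up a dimension and $*_0$ given by enriched composition, together with the evident inverse restricting to fixed $0$-source and $0$-target. Your write-up is simply a more careful elaboration (e.g.\ realizing the $0$-source of $x \in \mathcal{C}(a,b)$ as the unit $\mathrm{id}_a$ rather than the object $a$ itself, and identifying middle-four interchange with functoriality of $\circ$), so it is correct and takes essentially the same approach.
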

\begin{proof}
The construction of this functor is extends the construction of a 2-category from a $\cat{Cat}$-enriched category. Let $\cC$ be a category enriched in $\omega^+$-categories. Define an $\omega^+$-category $C$ whose underlying set is
\[ C := \coprod_{u,v \in \ob\cC} \cC(u,v).\] The $0$-source and $0$-target of an element $a \in \cC(u,v)$ are $u$ and $v$, respectively, and $0$-composition is defined using the enriched category composition. The $n$-source,$n$-target, and $n$-composition are defined using the $(n-1)$-category structure of the $\omega^+$-category $\cC(u,v)$.

Conversely, the $\omega^+$-category enriched category of an $\omega^+$-category $C$ can be defined by taking its 0-cells as its objects, defining $C(u,v)$ to be the collection of elements with $0$-source $u$ and $0$-target $v$, using the operations $(s_n,t_n,*_n)$ for $n > 0$ to define the $\omega^+$-category structure on $C(u,v)$.
\end{proof}

\subsection{Orientals}\label{sec:oriental}

The $n$th \emph{oriental} $\mathcal{O}_n$ is a strict $n$-category  with a single $n$-cell whose source is the pasted composite of $(n-1)$-cells, one for each of the odd faces of the simplex $\Delta[n]$, and whose target is a pasted composite of $(n-1)$-cells, one for each of the even faces of the simplex $\Delta[n]$. The orientals $\mathcal{O}_n$ can be recognized as full sub $\omega$-categories of an $\omega$-category $\mathcal{O}_\omega$,  the free $\omega$-category on the $\omega$-simplex $\Delta[\omega]$, spanned by the objects that correspond to the vertices of $\Delta[n]$. The precise combinatorial definition of $\mathcal{O}_n$ is rather subtle to state, making use of Street's notion of \emph{parity complex}, which we decline to introduce in general. Before defining the orientals as special cases of parity complexes, we first describe the low-dimensional cases.

The orientals $\mathcal{O}_0, \mathcal{O}_1, \mathcal{O}_2,\ldots$ are $\omega$-categories, where each $\mathcal{O}_n$ is an $n$-category. In low dimensions:
\begin{itemize}
\item[($n=0$)] $\mathcal{O}_0$ is the $\omega$-category with a single $0$-cell:
\[ 0 \]
\item [($n=1$)]  $\mathcal{O}_1$ is the $\omega$-category with two 0-cells $0,1$ and a 1-cell:
\[ 
\raisebox{-0.5\height}{\includegraphics{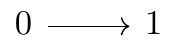}}
\]
\item[ ($n=2$)] $\mathcal{O}_2$ is the $\omega$-category with three 0-cells $0,1,2$ and four 1-cells as displayed:
\[
\raisebox{-0.5\height}{\includegraphics{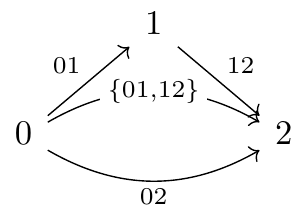}}
\]
Note that only  two of these are composable, with their composite the 1-cell denoted by $\{01,12\}$. The underlying 1-category of $\mathcal{O}_2$ is the non-commutative triangle, the free 1-category  generated by the ordinal $[2]$.

There is a unique 2-cell 
\[
\raisebox{-0.5\height}{\includegraphics{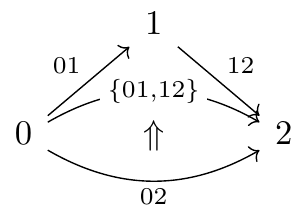}}
\]
whose 0-source is 0 and whose 0-target is 2, and whose 1-source is 02 and whose 1-target is $\{01,12\}$.
We can simplify our pictures by declining to draw the free composites that are present in 
 $\mathcal{O}_2$, as they must be in any $\omega$-category. Under this simplifying convention, $\mathcal{O}_2$ is depicted as:
\[
\raisebox{-0.5\height}{\includegraphics{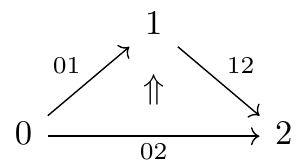}}
\]
\item[($n=3$)] Similarly $\mathcal{O}_3$ has four 0-cells, abbreviated 0,1,2,3; has the free category on the graph [3] as its underlying 1-category, with six atomic 1-cells and five free composites; has four atomic 2-cells plus two composites; and has a 3-cell from one of these composites to the other. Under the simplifying conventions established above, $\mathcal{O}_3$ can be drawn as:
\[
\raisebox{-0.5\height}{\includegraphics{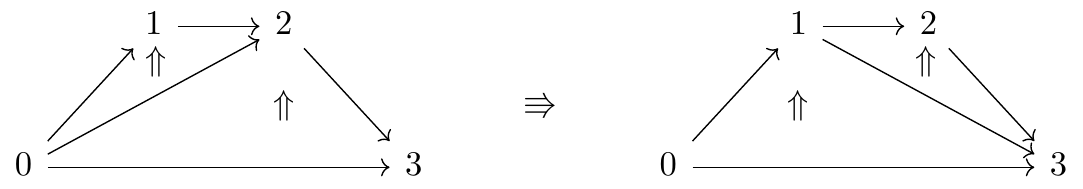}}
\]
\end{itemize}

\begin{definition}[the $n$th oriental, informally] The $n$th \textbf{oriental} is the strict $n$-category $\mathcal{O}_n$ whose 
 atomic $k$-cells corresponding to the $k$-dimensional faces of $\Delta[n]$ (the non-degenerate $k$-simplices, which can be identified with $(k+1)$-element subsets of $[n]$).  The co\-dimension-one source of a $k$-cell is a pasted composite of the odd faces of the $\Delta[k]$-simplex, while the codimension-one target is a pasted composite of the even faces of the $k$-simplex. If $S$ is a subset of faces of $\Delta[k]$ write $S^{-}$ for the union of the odd faces of simplices in $S$ and write $S^+$ for the union of even faces of simplices in $S$. Write $S_k$ for the $k$-dimensional elements of $S$ and $|S|_k$ for the elements of dimension at most $k$.
\end{definition}

\begin{definition}[the $n$th oriental, precisely]
The $k$-cells of the $n$-category $\mathcal{O}_n$ are pairs $(M,P)$ where $M$ and $P$ are non-empty, \textbf{well-formed}, finite subsets of faces of $\Delta[n]$ of dimension at most $k$ so that $M$ and $P$ both \textbf{move $M$ to $P$}. Here a subset $S$ of faces of $\Delta[n]$ is \textbf{well-formed} if it contains at most one vertex and if for any distinct elements $x\neq y$, $x$ and $y$ have no common sources and no common targets. A subset $S$ \textbf{moves $M$ to $P$} if
\[ M = (P \cup S^-) \backslash S^+\qquad \mathrm{and} \qquad P = (M \cup S^+) \backslash S^-.\]

If $(M,P)$ is a $m$-cell, the axioms imply that $M_m = P_m$. The $k$-source and $k$-target are given by
\begin{align*}
s_k(M,P) &:= (|M|_k, M_k \cup |P|_{k-1}) \\
t_k(M,P) &:= (|M|_{k-1} \cup P_k, |P|_k)
\end{align*}
and composition is defined by
\[ (M,P) *_k (N,Q) := ( M \cup (N \backslash N_k), (P \backslash P_k) \cup Q).\]
\end{definition}

\begin{example} The oriental $\mathcal{O}_4$ has a unique 4-cell given by the pair
\begin{align*} M &= \{ 01234, 0124,0234, 012,023,034,04,0\}\\  P &= \{01234,0123,0134,1234,124,234,014,01,12,23,34,4\}.\end{align*}
\end{example}

\begin{exercise} Identify the source and target of the unique 4-cell in $\mathcal{O}_4$.
\end{exercise}

\begin{exercise} Show that $\mathcal{O}_n$ has a unique $n$-cell.
\end{exercise}

The orientals satisfy the universal property of being freely generated by the faces of the simplex, in the sense of the following definition of free generation for an $\omega$-category.

\begin{definition} For an $\omega$-category $C$, write $\vert{C}\vert_n$ for its $n$-categorical truncation, discarding all higher-dimensional cells. The $\omega$-category $C$ is \textbf{freely generated} by a subset $G \subset C$ if for each $\omega$-category $X$, $n \in \omega$, $n$-functor $\vert{C}\vert_n \to X$, and map $G \cap \vert{C}\vert_{n+1} \to X$, compatible with $n$-sources and targets there exists a unique extension to an $(n+1)$-functor $\vert{C}\vert_{n+1} \to X$.
\[
\raisebox{-0.5\height}{\includegraphics{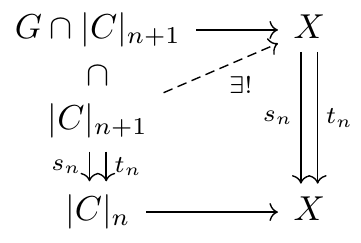}}
\]
\end{definition}

\begin{theorem}[Street] The category $\mathcal{O}_n$ is freely generated by the faces of $\Delta[n]$.
\end{theorem}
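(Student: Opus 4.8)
The plan is to verify the universal property of free generation directly, reducing everything to a single structural fact: that each cell of $\mathcal{O}_n$ admits a pasting decomposition into atomic cells (the faces of $\Delta[n]$), unique up to the associativity, unit, and interchange axioms of an $\omega$-category. Since the theorem fixes the oriental $\mathcal{O}_n$, I will write the induction parameter from the definition of free generation as $k$ to avoid a clash with $n$. The task is then, for each $k$, to extend a given $k$-functor $F \colon |\mathcal{O}_n|_k \to X$ together with a compatible assignment of the atomic $(k+1)$-cells to $(k+1)$-cells of $X$ (compatible with $k$-sources and targets) uniquely to a $(k+1)$-functor $|\mathcal{O}_n|_{k+1}\to X$.

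First I would establish the decomposition lemma: given a $(k+1)$-cell $(M,P)$, its top-dimensional faces $M_{k+1}=P_{k+1}$ are exactly the atomic $(k+1)$-cells occurring in $(M,P)$, and $(M,P)$ is recovered from these atoms by iterated composition under the lower operations $*_0,\dots,*_k$, with the gluing data recorded by cells of dimension at most $k$. Existence of such a decomposition I would prove by induction on the cardinality of $M_{k+1}$ via Street's excision-of-extremals: one locates an extremal top-dimensional face, splits it off as a whisker, and applies the inductive hypothesis to the remainder. The parity-complex axioms for the simplex guarantee that such an extremal element exists and that the movement relation (the condition that $(M,P)$ moves $M$ to $P$) is compatible with this splitting.

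With the decomposition in hand, existence of the extension is immediate: I would define $F(M,P)$ by replacing each atom in a chosen decomposition by its assigned image in $X$ and each lower cell by its image under $F$, then performing the corresponding composites in $X$; compatibility with $k$-sources and targets ensures these composites are well-formed. Functoriality (preservation of sources, targets, and the operations $*_j$) then follows, since the decomposition of a source, target, or composite can be read off from the decompositions of the inputs. Uniqueness is forced in the same way: any $(k+1)$-functor extending the data must send each atom and each lower cell to the prescribed value and preserve composition, so its value on $(M,P)$ is determined by the decomposition.

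The well-definedness of $F(M,P)$, that is, independence of the chosen decomposition, is the heart of the argument and the step I expect to be the main obstacle. Any two pasting presentations of the same cell are related by a sequence of applications of associativity, the unit laws, and the middle-four interchange, and one must check that these are exactly the relations in play, with none missing and no further relations required. This is precisely the uniqueness half of Street's theorem that a parity complex freely generates an $\omega$-category, and it rests on the full strength of the parity-complex axioms (well-formedness together with the source/target disjointness and loop-freeness of the faces of $\Delta[n]$); I would either verify these axioms for the simplex and invoke the general freeness theorem for parity complexes, or reprove the needed instance by the same excision induction, tracking that the extremal atom chosen at each stage is unique up to interchange.
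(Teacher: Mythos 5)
The paper offers no proof of this theorem at all---it is stated as a black box, attributed to Street, with the parity-complex machinery explicitly declined---so there is nothing internal to compare against; your proposal must be judged against Street's actual argument in \emph{The algebra of oriented simplexes} and \emph{Parity complexes}. Measured that way, your strategy is the right one: free generation is verified dimension by dimension via the universal property, the key input is that every cell $(M,P)$ of $\mathcal{O}_n$ decomposes as an iterated composite of atoms $\langle x\rangle=(\mu(x),\pi(x))$ for faces $x$ of $\Delta[n]$, and the decomposition is produced by excision of extremals. Two points deserve emphasis, though. First, the existence of an extremal element and the validity of the splitting step are not free; they rest on the loop-freeness axioms for the parity structure on $\Delta[n]$, and verifying those axioms for the simplex (where the odd/even orientation of faces does genuine combinatorial work) is a substantial part of any complete proof, not a formality to be waved at.

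Second, and more seriously, the step you correctly identify as the heart of the matter---well-definedness of $F(M,P)$ across decompositions---is not handled in Street's development the way you describe. Street does not classify pasting presentations and check that they are related by associativity, units, and interchange; a general ``any two pasting presentations of the same cell are so related'' statement is exactly the kind of claim that is delicate in higher dimensions (Street's original excision argument required a published corrigendum). Instead, the construction sidesteps the issue: the candidate free $\omega$-category is \emph{defined} to be the $\omega$-category of pairs $(M,P)$, so composites are already canonical, and the universal property is proved by showing that an extension of the generating data is forced on each $(M,P)$ by one chosen decomposition (uniqueness) and that the resulting assignment is functorial (existence), the latter being where the real inductive work lives. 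As written, your proposal defers precisely this content either to ``the general freeness theorem for parity complexes''---which is the theorem being proved, in its general form---or to an unexecuted reproof. So the outline is faithful to the known proof, but the argument it would need to supply is exactly the part left open.
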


\begin{exercise}
Use this universal property  to show that the orientals define a cosimplicial object in $\omega$-categories
\[ \mathcal{O} \colon \DDelta \to \vcat{\omega}.\]
\end{exercise}

This cosimplicial object gives rise to the Street nerve, to which we now turn.

\subsection{The Street nerve as a strict complicial set}\label{sec:nerve}

\begin{definition}
The \textbf{Street nerve} of an $\omega$-category $C$,  is the simplicial set
$NC$ whose $n$-simplices are $\omega$-functors $\mathcal{O}_n \to C$.
\end{definition}

\begin{example}[Street nerves of low-dimensional categories] $\quad$
\begin{enumerate}
\item The Street nerve of a 1-category is its usual nerve.
\item  The Street nerve of a 2-category has 0-simplices the objects, 1-simplices the 1-cells, and 2-simplices the 2-cells $\alpha\colon h \To gf$ whose target is a specified composite
\[
\raisebox{-0.5\height}{\includegraphics{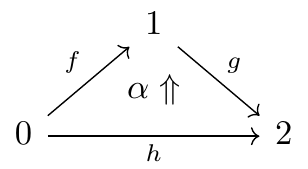}}
\]
The 3-simplices record equations between pasted composites of 2-cells of the form
\[
\raisebox{-0.5\height}{\includegraphics{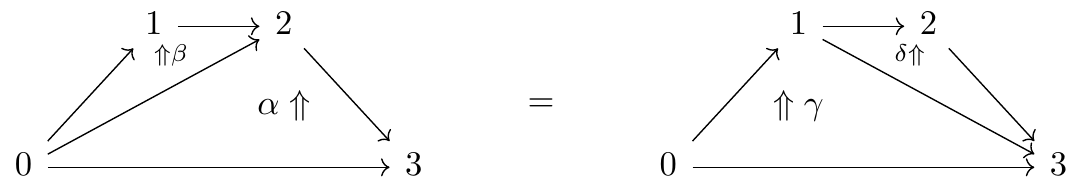}}
\]
This simplicial set is 3-coskeletal, with a unique filler for all spheres in higher dimensions.
\end{enumerate}
\end{example}

In general:

\begin{theorem}[Street] The nerve of an $n$-category is $(n+1)$-coskeletal.
\end{theorem}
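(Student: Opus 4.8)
The plan is to reduce the statement to the universal property of the orientals via a nerve--realization adjunction. Recall first the standard reformulation of coskeletality: a simplicial set $X$ is $(n+1)$-coskeletal exactly when, for every $m \geq n+2$, the restriction map $X_m \to \hom(\partial\Delta[m],X)$ is a bijection, i.e.\ every sphere $\partial\Delta[m]\to X$ has a unique filler. Since $\mathcal{O}\colon\DDelta\to\vcat{\omega}$ lands in a cocomplete category, left Kan extension along the Yoneda embedding yields a realization functor $L\colon\cat{sSet}\to\vcat{\omega}$, colimit-preserving and left adjoint to $N$, with $L\Delta[m]=\mathcal{O}_m$. Transposing across $L\dashv N$, the restriction map above is identified with
\[ \hom(\mathcal{O}_m,C)\longrightarrow\hom(L\partial\Delta[m],C), \]
precomposition with the canonical comparison $L\partial\Delta[m]\to\mathcal{O}_m$. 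So it suffices to prove this map is a bijection for every $n$-category $C$ and every $m\geq n+2$.

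Next I would identify the source. As $L$ preserves colimits and $\partial\Delta[m]$ is the union of the proper faces of $\Delta[m]$, the object $L\partial\Delta[m]$ is the $\omega$-category freely generated by the proper faces of $\Delta[m]$, whereas Street's theorem presents $\mathcal{O}_m$ as freely generated by \emph{all} faces. Hence $\mathcal{O}_m$ is obtained from $L\partial\Delta[m]$ by freely adjoining a single generating $m$-cell $g$ --- the top face --- whose codimension-one source $s_{m-1}g$ and target $t_{m-1}g$ are the prescribed pasted composites of the odd and even codimension-one faces. By the free-generation universal property, an extension of an $\omega$-functor $F\colon L\partial\Delta[m]\to C$ is precisely a choice of cell $x\in C$ with $s_{m-1}x=F(s_{m-1}g)$ and $t_{m-1}x=F(t_{m-1}g)$, so bijectivity amounts to showing there is exactly one such $x$.

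Here I would use that $C$ is an $n$-category: every cell has dimension at most $n$, so $s_j$ and $t_j$ act as the identity on $C$ for all $j\geq n$. Since $m-1\geq n$, the equation $s_{m-1}x=F(s_{m-1}g)$ reads $x=F(s_{m-1}g)$, which both forces $x$ uniquely and requires, for existence, the identity $F(s_{m-1}g)=F(t_{m-1}g)$. This last equality is where the hypothesis $m\geq n+2$ is spent: because $m-2\geq n$ we also have $s_{m-2}=\mathrm{id}$ on $C$, so applying $s_{m-2}$ together with the globular identities $s_{m-2}s_{m-1}=s_{m-2}=s_{m-2}t_{m-1}$ gives
\[ F(s_{m-1}g)=F(s_{m-2}g)=F(t_{m-1}g). \]
Thus $x:=F(s_{m-1}g)$ is the unique valid extension, the comparison map is a bijection, and $NC$ is $(n+1)$-coskeletal. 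The argument breaks at $m=n+1$, where $s_{m-2}=s_{n-1}$ need not be trivial and the two images can genuinely differ; this is exactly why the nerve is no better than $(n+1)$-coskeletal --- consistent with the classical nerve of a $1$-category being $2$-coskeletal and the $2$-categorical nerve described above being $3$-coskeletal.

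I expect the genuine obstacle to be the middle step: proving that $L\partial\Delta[m]$ is the free $\omega$-category on the proper faces and that $\mathcal{O}_m$ is its free extension by one top cell with the stated boundary. This is a statement about how the colimit computing the realization of $\partial\Delta[m]$ glues the orientals of the faces together, and verifying it cleanly requires unwinding Street's parity-complex construction of the orientals and checking compatibility of the free-generation property with the simplicial operators. Once that structural fact is in hand, the forced-identity computation above is elementary.
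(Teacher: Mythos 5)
The paper does not actually prove this theorem: it is attributed to Street and stated without argument, with only the preceding example (the $3$-coskeletality of the nerve of a $2$-category) as evidence. So there is no in-paper proof to compare against, and your proposal must stand on its own. Its architecture is the standard one, and the endgame is correct and complete: the reformulation of $(n+1)$-coskeletality as unique sphere-filling for $m \geq n+2$, the transposition across $L \dashv N$, and the dimension count are all right. In particular the globularity computation $F(s_{m-1}g) = F(s_{m-2}g) = F(t_{m-1}g)$ is exactly where $m \geq n+2$ enters, and your observation that uniqueness (but not existence) already holds at $m = n+1$ correctly locates why the bound is sharp.

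The one step you leave open --- that $L\partial\Delta[m] \to \mathcal{O}_m$ exhibits $\mathcal{O}_m$ as the free extension of $L\partial\Delta[m]$ by a single $m$-cell with the prescribed boundary --- is indeed the only real content, and you are right to flag it; as stated it is a claim about the latching object of the cosimplicial object $\mathcal{O}$, and verifying that the colimit of orientals over the proper faces coincides with the sub-$\omega$-category of $\mathcal{O}_m$ they generate does require Street's parity-complex analysis. However, you can largely sidestep this by not passing to $L\partial\Delta[m]$ at all: a sphere $\partial\Delta[m] \to NC$ is a compatible family of functors $F_i \colon \mathcal{O}_{m-1} \to C$, every proper face of $\Delta[m]$ lies in some codimension-one face, and the simplicial identities guarantee that the $F_i$ assign a well-defined image to each proper generator, compatibly with sources and targets (since the source and target of a generator lie in any face containing it). Iterating the free-generation universal property quoted in the paper dimension by dimension then builds the unique functor on the proper generators, leaving only the top cell, which your dimension argument handles. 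Phrased this way, the proof uses only the freeness statement the paper records, and the heavier colimit identification is not needed.
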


The Street nerve can be lifted along $U \colon \cat{Strat} \to \cat{sSet}$ by choosing a stratification for the simplicial set $NC$. 

\begin{definition} In the \textbf{identity stratification} of the Street nerve of an $\omega$-category $C$, an $n$-simplex in $NC$ is marked if and only if the corresponding $\omega$-functor $\mathcal{O}_n \to C$ carries the $n$-cell in $\mathcal{O}_n$ to a cell of lower dimension in $C$. That is, in the identity stratification of $NC$, only those $n$-simplices corresponding to identities are marked.
\end{definition}

The identity stratification defines a functor $\vcat{\omega} \to \cat{Strat}$. This terminology allows us to restate the Street--Roberts conjecture more concisely:

\begin{theorem}[Verity]
The Street nerve with the identity stratification defines a fully faithful embedding 
\[ 
\raisebox{-0.5\height}{\includegraphics{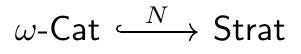}}
\] of $\omega$-categories into stratified simplicial sets, with essential image the category of strict complicial sets.
\end{theorem}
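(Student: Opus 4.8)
The plan is to break the statement into three pieces: that the identity stratification makes $N$ a well-defined functor $\vcat{\omega}\to\cat{Strat}$, that this functor is fully faithful, and that its essential image is exactly the strict complicial sets. The first piece is immediate. An $\omega$-functor $f\colon C\to D$ preserves identities, so if an $n$-simplex $\mathcal{O}_n\to C$ sends the top $n$-cell of $\mathcal{O}_n$ to a lower-dimensional cell then so does its composite with $f$; hence $Nf$ preserves thinness and $N$ lifts to $\cat{Strat}$.

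For full faithfulness I would first handle the underlying simplicial nerve and then observe that the marking comes along for free. Recall that $N\colon\vcat{\omega}\to\cat{sSet}$ is the right adjoint to the left Kan extension of $\mathcal{O}\colon\DDelta\to\vcat{\omega}$ along the Yoneda embedding, so $N$ is fully faithful precisely when the orientals are \emph{dense}, i.e.\ when the canonical comparison $\int^{[n]\in\DDelta} NC_n\cdot\mathcal{O}_n\to C$ is an isomorphism for every $C$. This is exactly where Street's theorem that $\mathcal{O}_n$ is freely generated by the faces of $\Delta[n]$ does the work: each $k$-cell of $C$ is a map $\mathcal{O}_k\to C$, and the simplicial identities record precisely the source, target, and composition relations among these generators, so $C$ is canonically the colimit of its own diagram of orientals. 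To upgrade to $\cat{Strat}$, note that a map of stratified nerves is in particular a simplicial map, hence of the form $Nf$, and every such $Nf$ preserves the identity marking; conversely every $\omega$-functor induces a marking-preserving map. Thus $\cat{Strat}(NC,ND)=\cat{sSet}(NC,ND)=\vcat{\omega}(C,D)$.

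For the essential image I would argue both inclusions. To see that each $NC$ is a \emph{strict} complicial set, I would again use freeness of the orientals to convert the simplicial filling problems into algebra in $C$. A regular map $\Lambda^k[n]\to NC$ unpacks, via Street's universal property, into a partial pasting diagram of cells of $C$ in which the admissibility markings force the designated faces to be identities; since $C$ is a strict $\omega$-category its composites are unique, so the $k$-th face and the top simplex of the filler are uniquely determined, solving the complicial horn extension uniquely. The thinness extensions follow from the fact that a composite of identities in a strict $\omega$-category is again an identity, so whenever the admissible faces are marked the composite is forced to be marked; uniqueness here is automatic because the extension is entire. The combinatorial heart of this step is matching the admissible horn $\Delta^k[n]$ and its thin faces with the corresponding sub-pasting-scheme of $\mathcal{O}_n$.

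The reverse inclusion is the crux, and the step I expect to be hardest. Given a strict complicial set $X$, I would manufacture an $\omega$-category $C$ directly from its data: the $0$-cells are the vertices, and the \emph{unique} complicial horn fillers supply source, target, identity, and composition operations on the simplices in each dimension. One must then check that these operations satisfy globularity, the unit laws, associativity, and the middle-four interchange \emph{strictly} --- and it is precisely the uniqueness of the fillers that rigidifies the a priori weak composition into a strict one, so this is where the strict (rather than merely weak) complicial hypothesis is indispensable. Finally one verifies $X\cong NC$ as stratified sets: coskeletality of nerves of $n$-categories (together with the fact that $\mathcal{O}_n$ has a unique top cell) guarantees that the low-dimensional reconstruction determines $X$ in all dimensions, and the identity stratification is recovered because the thin simplices of a strict complicial set are exactly those forced to witness identities by the thinness extensions. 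Assembling these coherence verifications into a clean argument is the substantial technical content, carried out by Verity through the theory of parity complexes, which we do not reproduce here.
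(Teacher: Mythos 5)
The paper does not prove this theorem: it states it twice (as Theorem \ref{thm:street-roberts} and again in \S\ref{sec:nerve}) and explicitly defers the proof to Verity's memoir \cite{verity-complicial}, remarking that there is no space to ``dive into proof of this result here.'' So there is no in-paper argument to compare yours against, and the only fair question is whether your outline would, if fleshed out, constitute a proof.

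As a roadmap it identifies the right architecture (functoriality of the marking, full faithfulness via density of the orientals, and the two inclusions for the essential image), but two of its load-bearing steps are asserted rather than established, and together they are essentially the whole content of the theorem. First, your argument for density of the orientals does not go through as written: you say ``each $k$-cell of $C$ is a map $\mathcal{O}_k\to C$,'' but a $k$-cell of $C$ is globular, whereas the top cell of $\mathcal{O}_k$ has as source and target \emph{pasted composites} of $(k-1)$-cells indexed by the odd and even faces of $\Delta[k]$; converting between these shapes, and showing that all relations in $C$ are generated by the cosimplicial identities among the $\mathcal{O}_n$, is genuinely nontrivial and is not a formal consequence of the freeness of each individual $\mathcal{O}_n$. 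Second, you correctly flag the reverse inclusion---reconstructing a strict $\omega$-category from a strict complicial set and verifying that the uniquely determined composites satisfy globularity, units, associativity, and interchange strictly, then proving $X\cong NC$---as the crux, but you then defer it entirely to Verity. That deferral is honest and arguably unavoidable (this is the step that occupies most of \cite{verity-complicial}, via the machinery of parity complexes and their nerves), but it means the proposal is an annotated statement of the proof strategy rather than a proof. Within the scope of these notes that is a reasonable outcome; just be aware that the gap you acknowledge, plus the density claim you treat as formal, are where all the difficulty lives.
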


\begin{example}$\quad$
\begin{enumerate}
\item If $C$ is a 1-category, the identity stratification turns $NC$ into a 2-trivial strict complicial set with only the identity (i.e., degenerate) 1-simplices marked. 
\item If $C$ is a 2-category, the identity stratification  turns $NC$ into a 3-trivial strict complicial set with only the degenerate 1-simplices marked and with a 2-simplex marked if and only if it is inhabited by an identity 2-cell, whether or not there are degenerate edges, e.g.,:
\[
\raisebox{-0.5\height}{\includegraphics{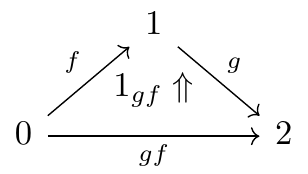}}
\]
\end{enumerate}
\end{example}

An interesting feature of the complicial sets model of higher categories is that strict $\omega$-categories can also be a source of  \emph{weak} rather than \emph{strict} complicial sets, simply by choosing a more expansive marking convention. We begin the next section by exploring this possibility.

\section{Saturated complicial sets}\label{part:saturation}

In the previous section, we defined the Street nerve of an $\omega$-category $C$, a simplicial set $NC$ whose $n$-simplices are diagrams $\mathcal{O}_n \to C$ indexed by the $n$th oriental. We observed that this simplicial set becomes a strict complicial set if we mark precisely those diagrams $\mathcal{O}_n \to C$ that carry the $n$-cell of $\mathcal{O}_n$ to a cell of dimension less than $n$ in $C$ (i.e., to an identity).

One of the virtues of the complicial sets model of weak higher categories is the possibility of changing the stratification on a given simplicial set if one desires a more generous or more refined notion of thinness, corresponding to a tighter or looser definition of composition. The identity stratification of $NC$ is the smallest stratification that makes this simplicial set into a weak complicial set, but we will soon meet other larger stratifications that are more categorically natural.

In \S\ref{sec:weak-from-strict}, we begin by looking in low dimensions for limitations on which simplices can be marked in a complicial set, and discover that any marked 1-simplex is necessarily an 1-equivalence, in a sense that we define. In \S\ref{sec:saturation}, we introduce the higher-dimensional generalization of these notions. We conclude in \S\ref{sec:model} by summarizing the work of Verity that establishes the basic homotopy theory of complicial sets of various flavors.

The construct weak complicial sets from nerves of strict $\omega$-categories, the stratification on the Street nerve is enlarged,  but in other instances refinement of the markings is desired. For example, Verity constructs a Kan complex of simplicial cobordisms between piecewise-linear manifolds. Because the underlying simplicial set is a Kan complex, it becomes a weak complicial set under the 0-trivial stratification where all cobordisms (all positive-dimensional simplices) are marked. Other choices, in increasing order of refinement, are to mark the $h$-cobordisms (cobordisms for which the negative and positive boundary inclusions are homotopy equivalences), the quasi-invertible cobordisms (the ``equivalences''), or merely the trivial cobordisms (meaning the cobordism ``collapses'' onto its negative and also its positive boundary).

\subsection{Weak complicial sets from strict $\omega$-categories}\label{sec:weak-from-strict}

To explore other potential markings of Street nerves of strict $\omega$-categories, we first ask whether it is possible to mark more than just the degenerate 1-simplices?

If $f$ is a marked edge in any complicial set $A$, then the $\Lambda^2[2]$-horn with 0th face $f$ and 1st face degenerate is admissible, so $f$ has a right equivalence inverse. A dual construction involving a $\Lambda^0[2]$-horn shows that $f$ has a left equivalence inverse.
\begin{equation}\label{eq:1-equiv}
\raisebox{-0.5\height}{\includegraphics{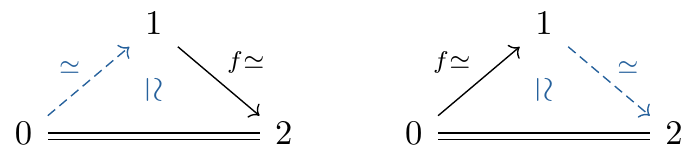}}
\end{equation}
 The elementary thinness extensions imply further than these one-sided inverses are also marked, so they  admit further inverses of their own. 
 
  \begin{definition}\label{defn:1-equivalence} A 1-simplex in a stratified simplicial set is a 1-\textbf{equivalence} if there exist a pair of thin 2-simplices as displayed
 \[
\raisebox{-0.5\height}{\includegraphics{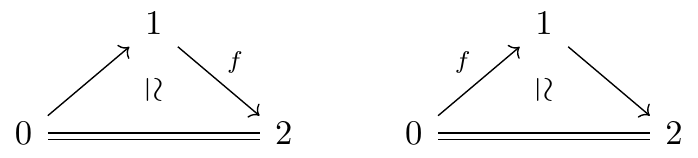}}
\]
\end{definition}

Note the notion of 1-equivalence is defined relative to the 2-dimensional stratification.  

\begin{remark}
There are many equivalent ways to characterize the 1-equivalences in a complicial set $A$. We choose Definition \ref{defn:1-equivalence} because of its simplicity and naturality, and because this definition provides a homotopically well-behaved type of equivalences in homotopy type theory; see \cite[2.4.10]{HoTT}.
\end{remark}

The elementary anodyne extensions displayed in \ref{eq:1-equiv} prove:
 
 \begin{proposition}\label{prop:marked-edge-equivalences} Any marked 1-simplex in a complicial set is a 1-equivalence.
 \end{proposition}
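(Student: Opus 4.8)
The plan is to read off the two thin $2$-simplices demanded by Definition \ref{defn:1-equivalence} as fillers of outer admissible horns, so that the whole argument reduces to two applications of the complicial horn extensions \eqref{eq:complicial-comp}, exactly as the picture \eqref{eq:1-equiv} suggests. Write $f$ for the marked edge, with source $a$ and target $b$. To produce a right inverse I would assemble a horn $\Lambda^2[2] \to A$ whose $0$th face (the edge $[1,2]$) is $f$ and whose $1$st face (the edge $[0,2]$) is the degeneracy on $b$; since $A$ is a complicial set it extends along $\Lambda^2[2] \hookrightarrow_r \Delta^2[2]$, and the filler is a thin $2$-simplex whose remaining $[0,1]$-face is an edge $g$ with $fg \simeq \mathrm{id}_b$. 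Dually, a horn $\Lambda^0[2] \to A$ with $2$nd face $f$ and $1$st face the degeneracy on $a$ extends along $\Lambda^0[2] \hookrightarrow_r \Delta^0[2]$ to a thin $2$-simplex exhibiting a left inverse $h$ with $hf \simeq \mathrm{id}_a$. These two thin $2$-simplices are precisely the data displayed in Definition \ref{defn:1-equivalence}, so once they are constructed the conclusion that $f$ is a $1$-equivalence is immediate.

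The only point that genuinely uses the hypothesis, and hence the place to be careful, is checking that these outer horns are \emph{admissible}: that each assignment really is a map of stratified sets into $A$, so that the elementary anodyne extensions of Definition \ref{defn:complicial} apply. Here the definition of the $k$-admissible $2$-simplex in the outer cases $k \in \{0,2\}$ marks a single non-degenerate edge, namely $[k-1,k,k+1] \cap [2]$, which is exactly the edge on which I have placed $f$. Thus the requirement that the horn preserve thinness reduces to the single condition that $f$ be thin, while the other specified face is a degeneracy and so automatically marked. I do not expect a deeper obstacle than this bookkeeping: once the markings of $\Delta^k[2]$ are matched against the markedness of $f$, the fillers are guaranteed.

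Finally, I would remark that although Definition \ref{defn:1-equivalence} asks only that the two $2$-simplices be thin, and not that the inverse edges $g$ and $h$ themselves be marked, the complicial thinness extensions \eqref{eq:complicial-thin} in fact force $g$ and $h$ to be thin as well, so that they too are $1$-equivalences admitting inverses of their own. This stronger conclusion is not needed for the present statement, but it explains why exhibiting one-sided inverses suffices.
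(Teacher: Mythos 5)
Your proposal is correct and follows exactly the paper's own argument: the paper establishes this proposition by filling the admissible $\Lambda^2[2]$-horn with $0$th face $f$ and $1$st face degenerate, and dually an admissible $\Lambda^0[2]$-horn, with admissibility in each outer case coming down to the single marked edge $[k-1,k,k+1]\cap[2]$ carrying the marked simplex $f$. Your closing remark that the complicial thinness extensions force the one-sided inverses to be marked also appears verbatim in the paper's discussion.
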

 
 This result suggests an alternate stratification for nerves of 1-categories:
 
 \begin{proposition} If $C$ is a 1-category then the 2-trivial stratification of $NC$ with the isomorphisms as marked 1-simplices defines a complicial set.
 \end{proposition}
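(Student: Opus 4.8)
The plan is to exploit the fact that the underlying simplicial set of $NC$ is the ordinary nerve of a category, which is $2$-coskeletal and has unique fillers for inner horns. I would first record the stratification explicitly: every simplex of dimension at least $2$ is marked---each such simplex of $NC$ is a commutative configuration witnessing strict composition, and these must be thin if the inner complicial horn extensions are to produce thin witnesses---while in dimension $1$ exactly the isomorphisms are marked. This is a legitimate stratification, since identity $1$-simplices are degenerate and invertible and all degeneracies of dimension $\geq 2$ are marked. The key observation organizing the argument is that $2$-coskeletality reduces everything to dimensions $n \leq 3$: for $n \geq 4$ the horn $\Lambda^k[n]$ already contains the full $2$-skeleton of $\Delta[n]$ (every $2$-face omits at least two vertices, hence lies in some $d_i$ with $i \neq k$), so each map $\Lambda^k[n] \to NC$ extends uniquely over $\Delta[n]$, and the only newly filled simplices, namely $d_k$ and the top simplex, have dimension $\geq 2$ and so are thin in $NC$.

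For the complicial horn extensions I would treat the low-dimensional cases by hand. In dimension $1$ the horns are filled by degeneracies, which are marked. For $n = 2$ the inner horn $\Lambda^1[2] \hookrightarrow_r \Delta^1[2]$ is filled by the unique composite, whose $2$-simplex is thin; the outer horns $\Lambda^0[2]$ and $\Lambda^2[2]$ are the crux of the proposition. There the admissibility stratification forces the first (resp.\ last) edge of the horn to be thin, hence an isomorphism, and one fills the horn using its inverse, e.g.\ solving $g\circ f = h$ by $g = h \circ f^{-1}$. For $n = 3$ the inner horns again have unique fillers, while the outer horns $\Lambda^0[3], \Lambda^3[3]$ are the remaining delicate case, since the horn is missing one $2$-face and coskeletality does not apply directly. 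I would produce the missing face and verify consistency: reading the commutativity relations off the three given faces yields two expressions for the edge $03$, and equating them and cancelling the forced-thin (hence invertible) edge $01$ gives exactly the relation making the missing face a commutative triangle. Throughout, the thinness constraints of $\Delta^k[n]$ are met because every face of dimension $\geq 2$ is marked and every edge that admissibility forces to be thin is, by hypothesis on the horn, sent to an isomorphism.

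For the complicial thinness extensions $\Delta^k[n]' \hookrightarrow_e \Delta^k[n]''$ the same reduction applies: for $n \geq 3$ the single newly marked face $d_k$ has dimension $\geq 2$ and is already thin in $NC$, so the extension is automatic, and for $n = 1$ the condition is vacuous. The only substantive case is $n = 2$, where the extension asserts that whenever two of the three edges of a commutative triangle $02 = (12)\circ(01)$ are isomorphisms then so is the third. This is precisely the closure of isomorphisms under composition together with the two-out-of-three property, which holds in any $1$-category.

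The main obstacle is concentrated in the outer horn extensions, and it is exactly the phenomenon the proposition is designed to illustrate: an outer horn can be filled only when its distinguished edge is invertible, which is why marking the isomorphisms rather than merely the identities is both necessary and sufficient. The dimension-$3$ outer horn is the most technical point, since it needs the explicit cocycle verification above rather than a bare appeal to coskeletality; the dimension-$2$ outer horn and the dimension-$2$ thinness extension are where the algebraic content---closure of isomorphisms under composition, inverses, and two-out-of-three---actually enters.
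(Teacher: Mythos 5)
The paper does not actually prove this proposition---it is stated and immediately followed by ``Prove this'' as an exercise---so there is no authorial argument to compare against. Your proof is correct and complete in outline: the reduction via $2$-coskeletality of the nerve to dimensions $n\leq 3$ is sound (for $n\geq 4$ the only faces of $\Lambda^k[n]$ missing from $\Delta[n]$ are the top simplex and $d_k$, both of dimension $\geq 3$, so the filler exists uniquely and every newly required thin face has dimension $\geq 2$); the outer horn cases correctly isolate where invertibility of the admissible edge is used, and your cocycle check for $\Lambda^0[3]$ (two expressions for the edge $03$, cancel the invertible edge $01$) is exactly the needed verification that the missing face $[123]$ commutes; and the $n=2$ thinness extension is indeed just two-out-of-three for isomorphisms. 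One small interpretive point you handle correctly but could flag more explicitly: ``$2$-trivial'' only forces simplices of dimension $>2$ to be marked, so marking \emph{all} $2$-simplices is an additional choice---but it is forced, since every $2$-simplex of $NC$ is the unique filler of its own $\Lambda^1[2]$-horn and the complicial horn extension demands a thin filler, which is precisely the justification you give.
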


Depending on the 1-category there may be intermediate stratifications where only some of the isomorphisms are marked (the set of marked edges has to satisfy the 2-of-3 property) but these are somehow less interesting.

 \begin{exercise}
 Prove this.
 \end{exercise}

Let us now consider the degenerate edges, the thing edges, and the 1-equivalences as subsets of  the set of 1-simplices in a complicial set $A$. In any stratified simplicial set, the degenerate 1-simplices are necessarily thin. In a complicial set $A$, Proposition \ref{prop:marked-edge-equivalences} proves that  the thin 1-simplices are necessarily 1-equivalences, but there is nothing in the complicial set axioms that guarantees that all equivalences are marked. We introduce terminology that characterizes when this is the case:

\begin{definition}\label{defn:1-sat}
A complicial set $A$ is 1-\textbf{saturated} if every 1-equivalence is marked.
\end{definition}

If a 1-trivial complicial set is 1-saturated then it is \emph{saturated} in the sense of Definition \ref{defn:saturation} below. From the definitions, it is easy to prove:

\begin{proposition}\label{prop:1-cat-stratified}
If $C$ is a strict 1-category, there is a unique saturated 1-trivial complicial structure on $NC$, namely the one in which every isomorphism in $C$ is marked. Moreover, this is the maximal 1-trivial stratification making $NC$ into a complicial set. 
\end{proposition}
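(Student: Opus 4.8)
The plan is to reduce everything to two observations about the $1$-trivial nerve: that its $1$-equivalences are exactly the isomorphisms of $C$, and that marking exactly the isomorphisms yields a complicial set. Once these are in hand, uniqueness and maximality follow formally from Proposition~\ref{prop:marked-edge-equivalences}. Throughout I use that a $1$-trivial stratification of the fixed simplicial set $NC$ is determined by its set of marked $1$-simplices, since every simplex of dimension $\geq 2$ is automatically thin and the degenerate edges are forced to be thin; so the whole question is about which edges (i.e.\ which morphisms of $C$) get marked.

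First I would identify the $1$-equivalences. Because $NC$ is $1$-trivial, every $2$-simplex is thin, so the two thin $2$-simplices required by Definition~\ref{defn:1-equivalence} impose no stratification constraint and exist precisely when $2$-simplices with the prescribed boundaries exist in $NC$. Since a $2$-simplex of a nerve is exactly a commuting triangle, these boundaries are fillable iff an edge $f$ admits a morphism $g$ with $gf = \mathrm{id}$ and a morphism $h$ with $fh = \mathrm{id}$; the standard computation $g = g(fh) = (gf)h = h$ then exhibits a two-sided inverse. Hence the $1$-equivalences of the $1$-trivial $NC$ are precisely the isomorphisms of $C$.

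Next I would verify that marking every isomorphism (together with all higher simplices) makes $NC$ a complicial set. This is the structure already produced by the proposition above asserting that the isomorphism-marking defines a complicial set---for a nerve of a $1$-category every $2$-simplex witnesses a composite and is therefore thin, so that ``$2$-trivial'' stratification is in fact $1$-trivial---so I would cite it, or else check the elementary anodyne extensions directly: inner admissible horns have unique fillers because $NC$ is the nerve of a category; the thinness extensions hold trivially since the relevant top-dimensional simplices are already thin; and the admissible outer horns $\Lambda^0[n]$, $\Lambda^n[n]$ are fillable exactly because their distinguished thin edge is an isomorphism (solving $g = f^{-1}h$ in the base case $n=2$), with the higher cases reducing to $n=2$ by $2$-coskeletality of $NC$. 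This establishes existence, and saturation is then immediate: by the previous paragraph every $1$-equivalence is an isomorphism, hence marked, so the structure is $1$-saturated and therefore (a $1$-trivial complicial set being saturated as soon as it is $1$-saturated) saturated.

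Finally, uniqueness and maximality are formal. For any $1$-trivial complicial structure on $NC$, Proposition~\ref{prop:marked-edge-equivalences} forces every marked edge to be a $1$-equivalence, hence an isomorphism; so the marked edges always form a subset of the isomorphisms, which gives maximality since the isomorphism-marking is itself complicial. If moreover the structure is saturated, Definition~\ref{defn:1-sat} forces every $1$-equivalence---i.e.\ every isomorphism---to be marked, so the marked edges are exactly the isomorphisms and the structure is the one described, giving uniqueness. I expect the only genuine work to lie in the direct verification of the outer-horn and thinness extensions in the middle step (the remainder being bookkeeping), which is why I would prefer to quote the earlier proposition after noting that its stratification coincides with the $1$-trivial one.
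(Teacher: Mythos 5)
The paper offers no proof of this proposition---it is stated with ``From the definitions, it is easy to prove'' and followed by an exercise---so there is nothing to compare against except the intended outline, which your argument matches: identify the $1$-equivalences of the $1$-trivial $NC$ with the isomorphisms of $C$ (the thinness requirement in Definition~\ref{defn:1-equivalence} being vacuous when all $2$-simplices are marked, and a left plus a right inverse giving a two-sided one), then derive maximality from Proposition~\ref{prop:marked-edge-equivalences} and uniqueness from Definition~\ref{defn:1-sat}. This is correct and complete, granting the earlier (also unproved, exercise-level) proposition that the isomorphism-marking is complicial. One caveat on your alternative direct verification: the claim that the thinness extensions ``hold trivially since the relevant top-dimensional simplices are already thin'' fails for $n=2$, where the extension $\Delta^k[2]'\hookrightarrow_e\Delta^k[2]''$ is a condition on marked \emph{edges}---it demands that the marked edges be closed under composition and under left/right cancellation against marked edges. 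This does hold for the isomorphism-marking (isomorphisms satisfy two-of-three), but it is exactly the non-trivial case and should be checked rather than dismissed; only for $n\geq 3$ are the codimension-one faces automatically thin by $1$-triviality.
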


\begin{exercise} Prove this.
\end{exercise}

To build intuition for higher dimensional generalizations of these notions, next consider the Street nerve of a strict 2-category as a 2-trivial stratified simplicial set. As the notion of 1-saturation introduced in Definitions \ref{defn:1-equivalence} and \ref{defn:1-sat} depends on the markings of 2-simplices, it makes sense to consider the markings on the 2-simplices  first. If only identity 2-simplices are marked, then the 1-saturation of $NC$ is as before: marking all of the 1-cell isomorphisms in the 2-category $C$. But we might ask again whether a larger stratification is possible at level 2.

In any complicial set, consider a thin 2-simplex $\alpha$ with 0th edge degenerate. From $\alpha$ one can build admissible $\Lambda^1[3]$ and admissible $\Lambda^3[3]$-horns admitting thin fillers:
\[
\raisebox{-0.5\height}{\includegraphics{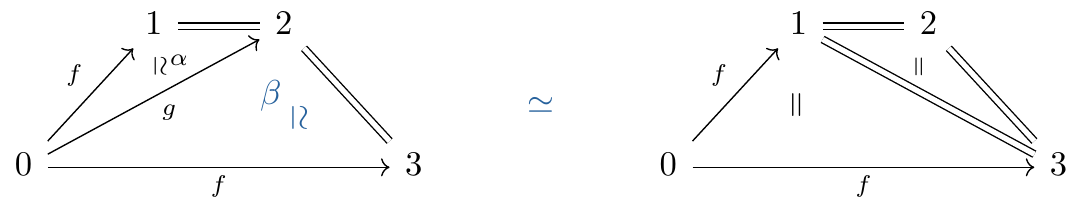}}
\]
\[
\raisebox{-0.5\height}{\includegraphics{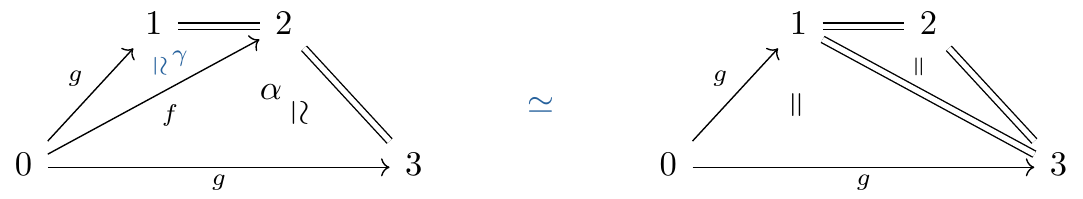}}
\]
So again we conclude that any thin 2-simplex of this form is necessarily an ``equivalence'' up to thin 3-simplices, in the sense of the displayed diagrams. Informally, a complicial set is 2-\emph{saturated} if all 2-simplices that are equivalences in this sense are marked. A precise definition of saturation that applies in any dimension appears momentarily as Definition \ref{defn:saturation}. It follows that:

\begin{proposition}\label{prop:2-cat-stratified}
If $C$ is a strict 2-category, there is a unique saturated 2-trivial complicial structure on $NC$, in which the 2-cell isomorphisms and the 1-cell equivalences are marked. Moreover, this is the maximal 2-trivial stratification making $NC$ into a complicial set. 
\end{proposition}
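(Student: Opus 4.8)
The plan is to run the one-dimensional argument behind Proposition~\ref{prop:1-cat-stratified} one level up, exploiting that the nerve of a $2$-category is $3$-coskeletal. Because we work only with $2$-trivial stratifications, every simplex of dimension $\geq 3$ is automatically marked, and $3$-coskeletality supplies a unique filler for each sphere $\partial\Delta[m]\to NC$ with $m\geq 4$; hence every complicial horn- and thinness-extension problem above dimension $3$ is solved for free, and the whole stratified structure is determined by the markings on $1$- and $2$-simplices. I would first record the candidate markings: a $1$-simplex (a $1$-cell of $C$) is marked exactly when it is an equivalence in $C$, a $2$-simplex $\alpha\colon h\To gf$ is marked exactly when the $2$-cell $\alpha$ is invertible, and every simplex of dimension $\geq 3$ is marked. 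The proposition then splits into three claims about this candidate: that it is complicial, that it is saturated, and that these properties pin it down uniquely and maximally.

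For \emph{existence} I would check the elementary anodyne extensions only in the two dimensions that carry content. The dimension-$2$ horn fillers encode composition of $1$-cells (inner horns, filled by the strict composite together with an identity $2$-simplex) and the filling of outer horns against a marked edge; both are handled as in the $1$-categorical case, using for outer horns that a marked edge is an equivalence in $C$. For a dimension-$3$ admissible horn $\Lambda^k[3]\to NC$, $3$-coskeletality reduces filling to the commutativity of the prescribed boundary, which unwinds to a pasting identity among $2$-cells of $C$ guaranteed by middle-four interchange and associativity; the witnessing $3$-simplex is then marked for free. The thinness extensions reduce to two closure statements in $C$: the $2$-of-$3$ property for $1$-cell equivalences (dimension $2$) and the closure of invertible $2$-cells under the relevant pasting composites (dimension $3$).

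The heart of the argument, and the step I expect to be the main obstacle, is to match the \emph{complicial} equivalences with the \emph{algebraic} ones in $C$, thereby proving saturation. Here I would establish two characterizations. First, unwinding the $\Lambda^1[3]$- and $\Lambda^3[3]$-horn constructions sketched just before the statement, together with the reduction of an arbitrary $2$-simplex to one with a degenerate outer edge, shows that a $2$-simplex $\alpha\colon h\To gf$ is a $2$-equivalence (admitting inverses up to the necessarily thin $3$-simplices) precisely when the $2$-cell $\alpha$ is invertible in $C$; the delicate point is to read off a genuine two-sided $2$-cell inverse from the left and right witnesses produced by the two horns, rather than a merely one-sided or up-to-pasting relation. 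Second, since thin $2$-simplices in the candidate stratification are exactly the invertible $2$-cells, Definition~\ref{defn:1-equivalence} unwinds to the statement that a $1$-cell $f$ is a $1$-equivalence iff there is a $1$-cell $g$ with invertible $2$-cells $\mathrm{id}\cong gf$ and $\mathrm{id}\cong fg$, i.e.\ iff $f$ is an equivalence in $C$. Together these identify the candidate markings with exactly the $1$- and $2$-equivalences, so the structure is saturated in the sense of Definition~\ref{defn:saturation}.

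Finally I would deduce uniqueness and maximality by a cascade. In any saturated $2$-trivial complicial structure, the $2$-dimensional analogue of Proposition~\ref{prop:marked-edge-equivalences} forces marked $2$-simplices to be $2$-equivalences and saturation forces the converse; since $2$-triviality makes the $2$-equivalence condition independent of the $2$-marking, the marked $2$-simplices are determined to be the invertible $2$-cells. With the $2$-marking fixed, the notion of $1$-equivalence is fixed, and Proposition~\ref{prop:marked-edge-equivalences} together with $1$-saturation determines the marked $1$-simplices to be the equivalences of $C$, giving uniqueness. For maximality, any competing $2$-trivial complicial stratification $\mathcal{M}'$ has its marked $2$-simplices contained in the $2$-equivalences, hence in the invertible $2$-cells; and since shrinking the set of thin $2$-simplices only tightens the $1$-equivalence condition, its marked $1$-simplices are contained in the equivalences of $C$. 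Thus $\mathcal{M}'$ is contained in the saturated stratification, which is therefore the largest $2$-trivial stratification making $NC$ complicial.
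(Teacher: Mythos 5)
The paper itself offers no proof of this proposition beyond the informal horn constructions preceding it, so you are genuinely filling a gap, and your concluding cascade is the right one: in a $2$-trivial stratification the set of $2$-equivalences depends only on the underlying simplicial set (all $3$-simplices are thin), the analogue of Proposition~\ref{prop:marked-edge-equivalences} bounds any $2$-trivial complicial marking above by the invertible $2$-cells and hence the marked edges by the equivalence $1$-cells, and saturation bounds the marking below; this gives uniqueness and maximality simultaneously.

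The gap is in the existence half, and it is an off-by-one error in your use of coskeletality. For $m\geq 5$ the horn $\Lambda^k[m]$ contains $\mathrm{sk}_3\Delta[m]$, so $3$-coskeletality does fill it for free; but $\mathrm{sk}_3\Delta[4]=\partial\Delta[4]$, and $\Lambda^k[4]$ is missing its $k$th $3$-dimensional face, so coskeletality only reduces the $\Lambda^k[4]$ extension problem to producing that missing $3$-simplex. A $3$-simplex of $NC$ is an \emph{equation} between pasted composites of $2$-cells, and the $2$-sphere prescribed by the horn need not bound one: you must derive that equation from the four given $3$-simplices together with the invertibility of the $2$-cells inhabiting the thin faces of $\Delta^k[4]$ (and, for $\Lambda^0[4]$ and $\Lambda^4[4]$, the invertibility of the marked edge $[0,1]$ or $[3,4]$). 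This dimension-$4$ verification is exactly where associativity and middle-four interchange do their work---the paper's picture of $\Lambda^2[4]\hookrightarrow_r\Delta^2[4]$ as ``composing two $3$-simplices'' is this step---and it is also where the structure becomes weak rather than strict, since the thin face $[k-1,k,k+1]$ is now only an invertible $2$-cell rather than an identity, so composites are determined only up to pasting with such a cell. Relatedly, your account of $\Lambda^k[3]$ elides that the missing $2$-face must first be \emph{constructed} (a composite $2$-cell obtained by pasting with the inverse of the thin witness) before any equation can be checked. Until the dimension-$4$ horns are treated, the claim that the proposed stratification is complicial is not established.
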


Unlike the 1-trivial saturated stratification on the Street nerve of a 1-category described in Proposition \ref{prop:1-cat-stratified}, the 2-trivial saturated stratification on the Street nerve of a 2-category described in Proposition \ref{prop:2-cat-stratified} describes a \emph{weak} and not a \emph{strict} complicial set.

\subsection{Saturation}\label{sec:saturation}

To define saturation in any dimension, it  is convenient to rephrase the definition of 1-saturation as a lifting property. The pair of thin 2-simplices 
 \[
\raisebox{-0.5\height}{\includegraphics{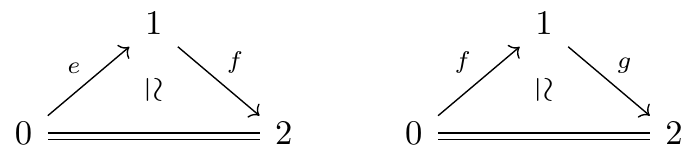}}
\]
define the 3rd and 0th faces of an inner admissible $\Lambda^1[3]$- or $\Lambda^2[3]$-horn that fills to define a thin 3-simplex
\[
\raisebox{-0.5\height}{\includegraphics{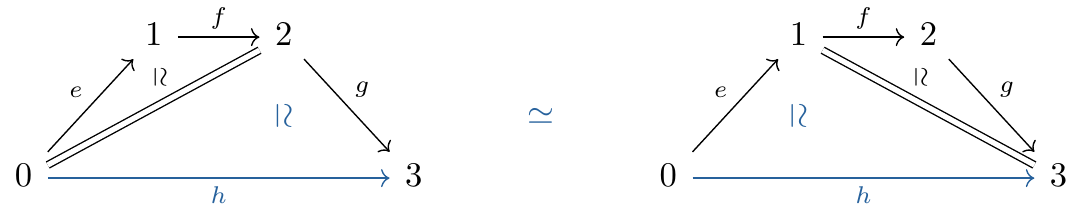}}
\]
This 3-simplex defines a map $\Delta[3]_{\mr{eq}} \to A$, where $\Delta[3]_{\mr{eq}}$ is the 3-simplex given a 1-trivial stratification with the edges $[02]$ and $[13]$ also marked. 

\begin{proposition}\label{prop:1-saturated-lifting} A complicial set $A$ is 1-saturated if and only if it admits extensions along the entire inclusion of $\Delta[3]_{\mr{eq}}$ into the maximally marked 3-simplex:
\[
\raisebox{-0.5\height}{\includegraphics{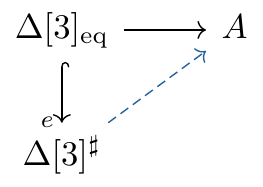}}
\]
\end{proposition}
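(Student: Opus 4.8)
The plan is to read off the extension problem concretely and reduce both implications to statements about $1$-equivalences. The entire inclusion $\Delta[3]_{\mr{eq}} \hookrightarrow_e \Delta[3]^\sharp$ changes nothing on the underlying simplicial set and only adds markings on the four edges $[01],[03],[12],[23]$; the edges $[02],[13]$, every $2$-face, and the top $3$-simplex are already thin in $\Delta[3]_{\mr{eq}}$. Consequently a map $\sigma\colon \Delta[3]_{\mr{eq}}\to A$ extends to $\Delta[3]^\sharp$ if and only if the four edges $\sigma[01],\sigma[03],\sigma[12],\sigma[23]$ are marked in $A$, while thinness-preservation already guarantees that $\sigma[02]$, $\sigma[13]$, and all higher faces land on thin simplices. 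Both directions will compare ``marked'' against ``$1$-equivalence'' using Proposition \ref{prop:marked-edge-equivalences} and Definition \ref{defn:1-sat}.

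For the ``if'' direction I would simply invoke the construction carried out just before the statement. Given a $1$-equivalence $e$, Definition \ref{defn:1-equivalence} supplies a pair of thin $2$-simplices whose long edges are degenerate; installing them as the $3$rd face $[012]$ and $0$th face $[123]$ of an inner admissible $\Lambda^1[3]$- or $\Lambda^2[3]$-horn and filling (using that $A$ is complicial) produces a thin $3$-simplex. This $3$-simplex is exactly a map $\Delta[3]_{\mr{eq}}\to A$ in which $e$ is the shared edge $[12]$ and the edges $[02],[13]$ are degenerate, hence thin, so the marking conventions match. The assumed extension to $\Delta[3]^\sharp$ forces every edge to be marked, in particular $e=\sigma[12]$; since $e$ was an arbitrary $1$-equivalence, $A$ is $1$-saturated.

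The ``only if'' direction is where the real work lies. Assuming $A$ is $1$-saturated, I must show that an \emph{arbitrary} $\sigma\colon\Delta[3]_{\mr{eq}}\to A$ extends, and by the reduction above it suffices to prove that the four edges $\sigma[01],\sigma[12],\sigma[23],\sigma[03]$ are $1$-equivalences, for then Definition \ref{defn:1-sat} marks them. Writing $f=\sigma[01]$, $g=\sigma[12]$, $h=\sigma[23]$, the thin faces $[012]$ and $[123]$ witness that the composites $gf$ and $hg$ are represented by the marked edges $\sigma[02]$ and $\sigma[13]$, which are $1$-equivalences by Proposition \ref{prop:marked-edge-equivalences}. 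The heart of the argument is then a two-out-of-six principle for $1$-equivalences: from $gf$ and $hg$ being equivalences one extracts a right inverse and a left inverse for $g$, concludes that $g$ is an equivalence, applies two-out-of-three to deduce that $f$ and $h$ are equivalences, and finally sees that $\sigma[03]\simeq hgf$ is a composite of equivalences. The main obstacle, and the step I expect to require the most care, is establishing this two-out-of-six principle inside the complicial-set axioms --- that is, proving closure of $1$-equivalences under composition together with the two-out-of-three property purely by filling admissible horns and applying the thinness extensions of Definition \ref{defn:complicial}, rather than importing it from an ambient homotopy theory.
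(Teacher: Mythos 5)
The paper itself offers no proof of this proposition---it is immediately followed by an exercise reading ``Prove this''---so there is nothing to compare your route against except the construction in the preceding paragraph. Your reduction of the extension problem is correct: since the inclusion is entire, a map $\sigma\colon\Delta[3]_{\mr{eq}}\to A$ extends over $\Delta[3]^\sharp$ exactly when the four edges $\sigma[01],\sigma[12],\sigma[23],\sigma[03]$ are marked. Your ``if'' direction is also fine; it is precisely the construction carried out just before the statement, which realizes any 1-equivalence as the $[12]$-edge of a map out of $\Delta[3]_{\mr{eq}}$ whose $[02]$- and $[13]$-edges are degenerate.

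The gap is in the ``only if'' direction, and you have flagged it yourself: everything rests on a two-out-of-six principle for 1-equivalences (closure under composition, two-out-of-three, and the fact that an edge with a one-sided inverse that is itself invertible is an equivalence), and none of this is proved. This is not a small omission, because Definition \ref{defn:1-equivalence} demands thin $2$-simplices whose third edge is \emph{degenerate}, not merely marked, so a claim like ``$f a^{-1}$ is a right inverse for $g$'' must be cashed out as the construction of an actual thin $2$-simplex of the form $(s,g;\mathrm{id})$. The mechanism that makes this work is the one you should exhibit at least once: form the $\Lambda^1[3]$-horn whose $[123]$-face is the given thin triangle $(f,g;a)$, whose $[013]$-face is a right-inverse witness $(r,a;\mathrm{id})$ for the marked edge $a$ supplied by \eqref{eq:1-equiv}, and whose $[012]$-face is an admissible filler of the inner $2$-horn $(r,f)$; this horn is admissible because its $[012]$-face is thin, and the filler combined with the thinness extension $\Delta^1[3]'\hookrightarrow_e\Delta^1[3]''$ (both the $0$th and $2$nd faces are thin) yields the required thin $[023]$-face $(fr,g;\mathrm{id})$, a genuine right-inverse witness for $g$. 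Dual and iterated versions of this move give the left inverse of $g$ from the marked edge $b$, and then---once $g$ is known to be marked by 1-saturation---the inverses for $f$, $h$, and $\sigma[03]$. Until these horn-filling arguments are written out, what you have is a correct plan rather than a proof.
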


\begin{exercise}
Prove this.
\end{exercise}

There are similar extension problems that detects saturation in any dimension, which are defined by forming the join of the inclusion $\Delta[3]_{\mr{eq}}\hookrightarrow_e \Delta[3]^\sharp$ with simplices on one side or the other. 

\begin{definition}[join and slice]
The ordinal sum on $\DDelta_+$ extends via Day convolution to a bifunctor on the category of augmented simplicial sets called the \textbf{join}. Any simplicial set can be regarded as a trivially augmented simplicial set. Under this inclusion, the join restricts to define a bifunctor
\[ \cat{sSet} \times \cat{sSet} \xrightarrow{\star} \cat{sSet}\] so that $\Delta[n] \star \Delta[m] = \Delta[n+m+1]$. 
More generally, an $n$-simplex in the join $A \star B$ of two simplicial sets is a pair of simplices $\Delta[k] \to A$ and $\Delta[n-k-1] \to B$ for some $-1 \leq k < n$. Here $\Delta[-1]$ is the trivial augmentation of the empty simplicial set, in which case the functors $\Delta[-1]\star -$ and $-\star\Delta[-1]$ are naturally isomorphic to the identity.

The \textbf{left} and \textbf{right slices} of a simplicial set $A$ over a simplex $\sigma \colon \Delta[n] \to A$ are the simplicial sets $\sigma\backslash A$ and $A/\sigma$ whose $k$-simplices correspond to diagrams
\begin{equation}\label{eq:join-slice}
\raisebox{-0.5\height}{\includegraphics{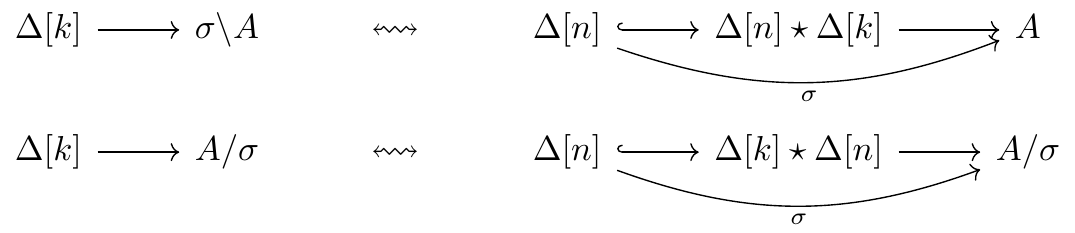}}
\end{equation}
See \cite{joyal} for more.
\end{definition}

\begin{definition}[stratified join]
The simplicial join lifts to a join bifunctor
\[ \cat{Strat} \times \cat{Strat} \xrightarrow{\star} \cat{Strat}\] in which a simplex $\Delta[n] \to A \star B$, with components $\Delta[k] \to A$ and $\Delta[n-k-1] \to B$, is marked in $A\star B$ if and only if at least one of the simplices in $A$ or $B$ is marked. More details can be found in \cite{verity-complicial}.
\end{definition}

\begin{exercise}
Define a stratification on the slices $\sigma\backslash A$ and $A/\sigma$ over an $n$-simplex $\sigma \colon \Delta[n] \to A$ so that the correspondence \eqref{eq:join-slice} extends to stratified simplicial sets.
\end{exercise}

\begin{definition}\label{defn:saturation} A complicial set is \textbf{saturated} if it admits extensions along the set of entire inclusions
\[ \{ \Delta[m] \star \Delta[3]_{\mathrm{eq}} \star \Delta[n] \hookrightarrow_e \Delta[m] \star \Delta[3]^\sharp \star \Delta[n] \mid n,m \geq -1\}.\]
In fact, it suffices to require only extensions 
\[
\raisebox{-0.5\height}{\includegraphics{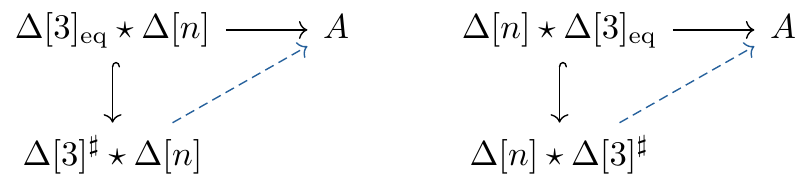}}
\]
along inclusions of one-sided joins of the inclusion $\Delta[3]_{\mathrm{eq}}\hookrightarrow_e\Delta[3]^\sharp$ with an $n$-simplex for each $n  \geq -1$, and as it turns out only the left-handed joins or right-handed joins are needed.
\end{definition}

By Proposition \ref{prop:1-saturated-lifting}, the $n=-1$ case of Definition \ref{defn:saturation}  asserts that every 1-equivalence in $A$, defined relative to the  marked 2-simplices and marked 3-simplices, is marked. By Proposition \ref{prop:1-saturated-lifting} again, the general extension property
\[ 
\raisebox{-0.5\height}{\includegraphics{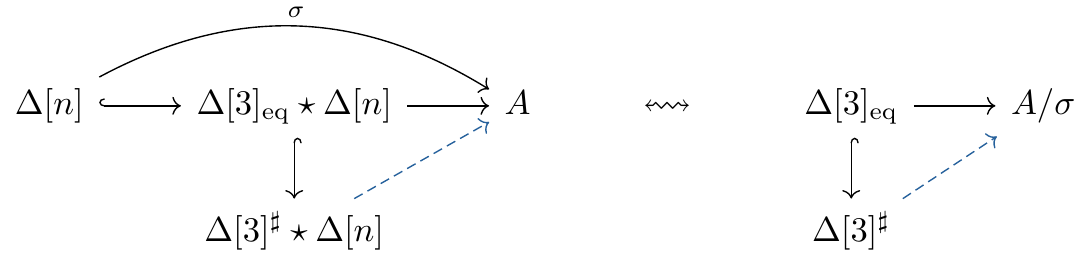}}
\]
 asserts that every 1-equivalence in the slice complicial set $A/\sigma$ is marked. 

At first blush, Definition \ref{defn:saturation} does not seem to be general enough. In the case of a vertex $\sigma \colon \Delta[0] \to A$, 1-equivalences in $A/\sigma$ define 2-simplices in $A$ whose $[01]$-edge is a 1-equivalence.  In particular, a generic 2-simplex
\[
\raisebox{-0.5\height}{\includegraphics{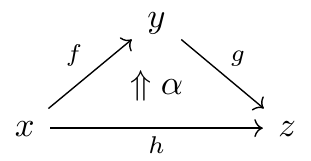}}
\]
with no 1-equivalence edges along its boundary, does not define a 1-equivalence in any slice complicial set. However, there are admissible 3-horns that can be filled to define the pasted composites of $\alpha$ with $1_f$ and $1_g$, respectively:
\[
\raisebox{-0.5\height}{\includegraphics{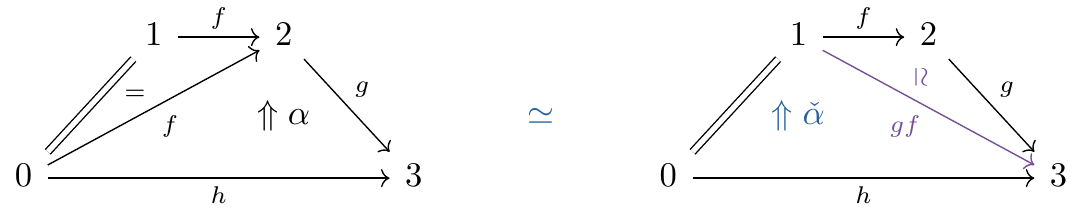}}
\]

\[
\raisebox{-0.5\height}{\includegraphics{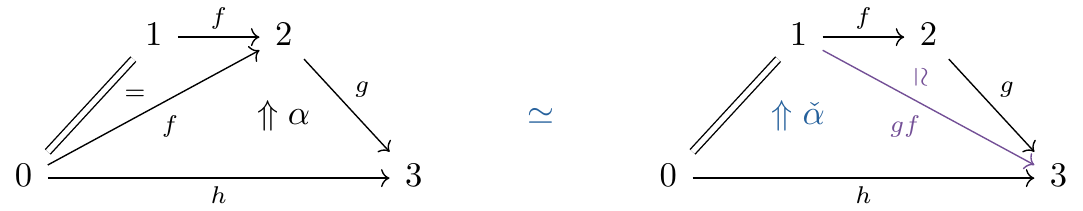}}
\]
By the complicial thinness extension property, if any of $\alpha$, $\hat{\alpha}$, or $\check{\alpha}$ are marked, then all of them are. 

\begin{exercise}
Generalize this  ``translation'' argument to prove that any $n$-simplex in a complicial set is connected via a finite sequence of $n$-simplices to an $n$-simplex whose first face is degenerate and an $n$-simplex whose last face is degenerate in such a way that if any one of these simplices is thin, they all are.
\end{exercise}

\begin{definition}\label{defn:n-equiv} In an $n$-trivial complicial set, an $n$-simplex $\sigma \colon \Delta[n] \to A$ is an $n$-\textbf{equivalence} if it admits an extension
\[
\raisebox{-0.5\height}{\includegraphics{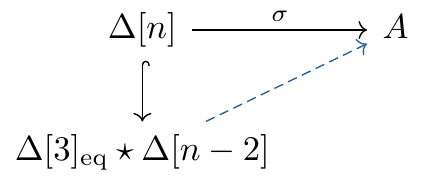}}
\]
along the map $\Delta[n] \hookrightarrow \Delta[3]_{\mr{eq}}\star \Delta[n-2]$ whose image includes the edge $[1,2]$ of $\Delta[3]_{\mr{eq}}$ and all of the vertices of $\Delta[n-2]$.
\end{definition}

\begin{remark}
The set of $n$-equivalences identified by Definition \ref{defn:n-equiv} depends on the marked $(n+1)$-simplices, which is the reason we have only stated this definition for an $n$-trivial complicial set. The $n$-equivalences in a generic complicial set are characterized by an inductive definition, the formulation of which we leave to the reader.
\end{remark}

\begin{example}[quasi-categories as complicial sets]\label{ex:q-cat-natural}
Expanding on the work of \S\ref{sec:quasi}, a quasi-category has a unique saturated stratification making it a complicial set: namely the 1-trivial saturation where all of the 1-equivalences are marked. This is the ``natural marking'' discussed in \cite{lurie}. Conversely, any 1-trivial saturated complicial set is a quasi-category. So quasi-categories are precisely the 1-trivial saturated complicial sets.
\end{example}

Each simplicial set has a minimum stratification, with only degeneracies marked. Because the definition of saturation is inductive, each simplicial set also has a minimum saturated stratification. Larger saturated stratifications also exist (e.g., the maximal marking of all positive-dimensional simplices). It is more delicate to describe how the process of saturating a given  complicial set interacts with the  complicial structure: adding new thin simplices adds new admissible horns which need fillers. What is more easily understood are model structures whose fibrant objects are complicial sets of a particular form, a subject to which we now turn.

\subsection{Model categories of complicial sets}\label{sec:model}

The category of stratified simplicial sets is cartesian closed, where the cartesian product $\times$ behaves like the \emph{Gray tensor product} in higher category theory.\footnote{Note that in the theory of bicategories, the cartesian product plays the role of the Gray tensor product in 2-category theory, in the sense that there is a biadjunction between the cartesian product and the hom-bicategory of pseudofunctors, pseudo-natural transformations, and modifications.} We write ``$\hom$'' for the internal hom characterized by the 2-variable adjunction
\[ \cat{Strat}(A \times B,C) \cong \cat{Strat}(A,\hom(B,C)) \cong \cat{Strat}(B,\hom(A,C)).\]

Let 
\[ I := \{ \partial\Delta[n]\hookrightarrow \Delta[n] \mid n \geq 0\} \cup \{ \Delta[n] \hookrightarrow\Delta[n]_t \mid n \geq 0 \}\] denote the generating set of monomorphisms of stratified simplicial sets introduced in Proposition \ref{prop:monos} and let 
\[J := \{ \Lambda^k[n] \hookrightarrow_r \Delta^k[n] \mid n \geq 1, k \in [n] \} \cup \{ \Delta^k[n]' \hookrightarrow_e \Delta^k[n]'' \mid n \geq 2, k \in [n] \}\] denote the set of \textbf{elementary anodyne extensions} introduced in Definition \ref{defn:complicial}, the right lifting property against which characterizes the complicial sets. A combinatorial lemma proves that the pushout product $I \hat{\times} J$ of maps in $I$ with maps in $J$ is an \textbf{anodyne extension}: that is, may be expressed as a retract of a transfinite composite of pushouts of coproducts of elements of $J$ (here mere composites of pushouts suffice). As a corollary:


\begin{proposition}[{Verity \cite{verity-weak-i}}]  If $X$ is a stratified simplicial set and $A$ is a weak complicial set, then $\hom(X,A)$ is a weak complicial set.
\end{proposition}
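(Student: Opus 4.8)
The plan is to verify directly that $\hom(X,A)$ admits extensions along each elementary anodyne extension $j \colon U \hookrightarrow V$ in the generating set $J$. Since the weak complicial sets are exactly the stratified simplicial sets with the right lifting property against $J$, and since this right lifting property is automatically preserved under the pushout and transfinite-composition operations that generate all elementary anodyne extensions from $J$, it suffices to test against the generators $j \in J$.

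First I would transpose the lifting problem across the two-variable adjunction recalled above. A map $U \to \hom(X,A)$ corresponds to a map $U \times X \to A$, and a filler $V \to \hom(X,A)$ corresponds to an extension $V \times X \to A$ of this map along $j \times \mathrm{id}_X \colon U \times X \to V \times X$. Hence $\hom(X,A)$ admits extensions along every $j \in J$ if and only if $A$ admits extensions along every map of the form $j \times \mathrm{id}_X$.

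The key observation is that $j \times \mathrm{id}_X$ is the pushout product $j \mathbin{\hat{\times}} (\emptyset \hookrightarrow X)$: since $\emptyset \times (-) = \emptyset$, the domain $(V \times \emptyset) \cup_{U \times \emptyset} (U \times X)$ of the pushout product collapses to $U \times X$, and the comparison map is precisely $j \times \mathrm{id}_X$. By Proposition \ref{prop:monos}, the monomorphism $\emptyset \hookrightarrow X$ is generated from $I$ under pushout and transfinite composition, and because $j \mathbin{\hat{\times}} (-)$ preserves colimits in its second variable, $j \times \mathrm{id}_X$ is correspondingly generated from the maps $j \mathbin{\hat{\times}} i$ with $i \in I$ under the same operations. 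Each such $j \mathbin{\hat{\times}} i$ belongs to $I \mathbin{\hat{\times}} J$ and is therefore an anodyne extension by the combinatorial lemma; as anodyne extensions are closed under pushout and transfinite composition, $j \times \mathrm{id}_X$ is itself anodyne. Because $A$ is a weak complicial set, it has the right lifting property against $J$ and hence against every anodyne extension, so the required extension along $j \times \mathrm{id}_X$ exists. Transposing back yields the filler $V \to \hom(X,A)$.

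With the combinatorial lemma taken as given, the argument is essentially formal, and I expect the only genuine obstacle to be the bookkeeping: correctly identifying $j \times \mathrm{id}_X$ with the pushout product $j \mathbin{\hat{\times}} (\emptyset \hookrightarrow X)$, and verifying that the pushout product commutes with the cellular presentation of $\emptyset \hookrightarrow X$ supplied by Proposition \ref{prop:monos}, so that anodyne-ness of the cellular pieces $j \mathbin{\hat{\times}} i$ propagates to $j \times \mathrm{id}_X$. All the real combinatorial content has been localized in the lemma establishing that $I \mathbin{\hat{\times}} J$ consists of anodyne extensions.
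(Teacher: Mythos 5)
Your argument is correct and is exactly the route the paper takes: the proposition is stated there as a corollary of the combinatorial lemma that $I \mathbin{\hat{\times}} J$ consists of anodyne extensions, and your transposition of the lifting problem together with the identification of $j \times \mathrm{id}_X$ as $j \mathbin{\hat{\times}} (\emptyset \hookrightarrow X)$, decomposed via the cellular presentation of Proposition \ref{prop:monos}, is precisely the formal unwinding the paper leaves implicit. No gaps.
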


Verity provides a very general result for constructing model structures whose fibrant objects are  defined relative to some set of monomorphisms $K$ containing $J$. Call a stratified simplicial set a $K$-\textbf{complicial set} if it  admits extensions along each map in $K$. Suppose $K$ is a set of monomorphisms of $\cat{Strat}$ so that
\begin{enumerate}
\item every elementary anodyne extension is in $K$
\end{enumerate}
and moreover each of/all of the following equivalent conditions hold for each $j \in K$:
\begin{enumerate}[resume]
\item Each element $j$ of $K$ is a $K$-\textbf{weak equivalence}: i.e., $\hom(j,A)$ is a homotopy equivalence\footnote{Two maps $f,g \colon X \to A$ are \textbf{homotopic} if they extend to a map $X \times \Delta[1]^\sharp \to A$. If $A$ is a weak complicial set, this ``simple homotopy'' is an equivalence relation.} for each $K$-fibrant stratified set.
\item $\hom(j,A)$ is a trivial fibration for each $K$-complicial set.
\item Each $K$-complicial set admits extensions along all the maps $i \hat{\times} j$ for all $i \in I$ and $j \in K$.
\end{enumerate}
Call a map that has the right lifting property with respect to the set $K$ a $K$-\textbf{complicial fibration}.

\begin{theorem}[Verity \cite{verity-weak-i}]\label{thm:model} Each set of stratified inclusions $K$ satisfying the conditions (i)--(iv) gives rise to a cofibrantly generated model structure whose:
\begin{itemize}
\item weak equivalences are the $K$-weak equivalences,
\item cofibrations are monomorphisms,
\item fibrant objects are the $K$-complicial sets, and 
\item fibrations between fibrant objects are $K$-complicial fibrations.
\end{itemize}
Moreover, such a model structure is monoidal with respect to the Gray tensor product.
\end{theorem}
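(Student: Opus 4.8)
The plan is to obtain the model structure as an application of a standard cofibrant-generation recognition principle (in the style of Jeff Smith's theorem, or the Cisinski machinery for model structures on presheaf categories), using the set $I$ of generating cofibrations and the set $K$ as the generating trivial cofibrations. Concretely, I would verify the hypotheses that produce a cofibrantly generated model structure from: a cofibrantly generated notion of cofibration (here the monomorphisms, generated by $I$ via Proposition \ref{prop:monos}); a class of weak equivalences satisfying the two-out-of-three property and closed under retracts; an accessibility/solution-set condition on the weak equivalences; and the compatibility conditions that glue these together, namely that $K$-injective maps are exactly the trivial fibrations among fibrations, and that maps with the right lifting property against $I$ are both fibrations and weak equivalences. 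Because $\cat{Strat}$ is a presheaf category (stratified simplicial sets are separated presheaves of a suitable kind), it is locally presentable, so smallness and accessibility are automatic; this removes most of the set-theoretic bookkeeping.

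The first substantive step is to define the $K$-weak equivalences and show they form a reasonable class. I would take the definition via condition (i)—$f$ is a $K$-weak equivalence when $\hom(f,A)$ is a homotopy equivalence for every $K$-complicial set $A$—and use the \emph{simple homotopy} relation (homotopy detected by $X \times \Delta[1]^\sharp \to A$, which is an equivalence relation on maps into a $K$-complicial set by the footnoted remark). Two-out-of-three and closure under retracts then follow formally from the corresponding properties of homotopy equivalences of mapping spaces, once I know $\hom(-,A)$ sends the relevant maps to well-behaved objects; the Proposition immediately preceding (that $\hom(X,A)$ is a weak complicial set when $A$ is) is the key input guaranteeing that these mapping objects are themselves complicial sets, hence that simple homotopy is an equivalence relation on them. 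The equivalence of conditions (i)--(iv) is assumed as a hypothesis, so I may freely use whichever characterization is most convenient at each point; in particular (iii), that $\hom(j,A)$ is a trivial fibration for every $K$-complicial set $A$, is the form I would use to identify trivial cofibrations.

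Next I would identify the two lifting-characterized classes and check they interlock correctly. Condition (i) in the enumerated hypotheses says every elementary anodyne extension lies in $K$, so $K$-complicial sets are in particular weak complicial sets, and $K$-complicial fibrations are fibrations in the weak-complicial sense; this pins down the fibrant objects and the fibrations between them as stated. The crucial compatibility is that a map has the right lifting property against all of $I$ if and only if it is a fibration that is also a weak equivalence (a ``trivial fibration''): the forward direction uses that $I$-injective maps are in particular $K$-injective (since, by the pushout-product hypothesis (iv), $K$-complicial sets are closed under the relevant operations), and that $\hom(i,A)$ being a trivial fibration for all $i \in I$ forces $\hom(f,A)$ to be a homotopy equivalence; the reverse direction uses (iii)/(iv) to promote a $K$-trivial-cofibration-plus-fibration to an $I$-injective map. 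This two-sided matching is exactly what the factorization axioms need, with factorizations produced by the small object argument applied to $I$ and to $K$ respectively.

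The hard part will be establishing that $J$-cofibrations (retracts of transfinite composites of pushouts of $K$) coincide with the intersection of cofibrations and weak equivalences—equivalently, that the two generated weak factorization systems $(I\text{-cof}, I\text{-inj})$ and $(K\text{-cof}, K\text{-inj})$ fit together so that $K\text{-cof} = \text{cof} \cap \mathcal{W}$. One containment (every map in $K$, hence every $K$-cofibration, is a weak equivalence) is essentially the content of condition (ii)/(i). The reverse containment—that every cofibration which is a weak equivalence is a $K$-cofibration—is the genuinely delicate point and is where the pushout-product condition (iv), $i \hat{\times} j$-injectivity, does the real work: it is precisely the input that lets one run a retract argument showing a trivial cofibration is built from the generators $K$. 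I would isolate this as the main lemma and expect its proof to rest on the combinatorial pushout-product fact already cited before the preceding Proposition (that $I \hat{\times} J$ consists of anodyne extensions), generalized from $J$ to $K$ using hypothesis (iv). Finally, the monoidal statement with respect to the Gray tensor product $\times$ follows from the pushout-product axiom for model categories: I would verify that the pushout product of two cofibrations is a cofibration, and is trivial if either factor is, which reduces by standard arguments to checking it on generators $I \hat{\times} I$ and $I \hat{\times} K$—the latter being exactly condition (iv), and the unit condition holding because the monoidal unit $\Delta[0]^\sharp$ is cofibrant.
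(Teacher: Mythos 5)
The paper's entire proof is a citation (``Apply Jeff Smith's theorem''), so at the top level your plan --- run a Smith-type recognition principle on the locally presentable category $\cat{Strat}$ with $I$ as generating cofibrations and the $K$-weak equivalences as $\mathcal{W}$ --- is exactly the intended route, and most of your verifications (accessibility, two-out-of-three via homotopy equivalences of the mapping objects $\hom(-,A)$, monoidality reduced to pushout products of generators) are on target.

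However, the step you isolate as ``the main lemma'' --- that $K$-cofibrations coincide with $\mathrm{cof}\cap\mathcal{W}$ --- is both unnecessary for Smith's theorem and false in general, so your plan as written would stall there. The tell is in the statement of the theorem itself: it only asserts that fibrations \emph{between fibrant objects} are $K$-complicial fibrations. If $K$ generated all trivial cofibrations, then \emph{every} fibration would be $K$-injective, and that fails already in the motivating special case: taking $K$ to be $J\cup K^{\mathrm{tr}}_1\cup K^{\mathrm{s}}$ recovers the Joyal model structure, where it is well known that the analogous generators (inner horns plus the equivalence-marking maps) do not generate the class of trivial cofibrations, and no explicit generating set is known. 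What Smith's theorem actually requires is the weaker statement that $\mathrm{cof}\cap\mathcal{W}$ is closed under pushout and transfinite composition, together with $I\text{-inj}\subseteq\mathcal{W}$ and the solution-set condition. Closure under pushout and transfinite composition is checked by applying $\hom(-,A)$ for a $K$-complicial set $A$: a monomorphism $f\in\mathcal{W}$ has $\hom(f,A)$ a fibration between complicial sets (by the $I\mathbin{\hat{\times}}J$ anodyne lemma and the preceding Proposition) which is a homotopy equivalence, hence a trivial fibration; trivial fibrations are stable under pullback and inverse limits of towers, and $\hom(-,A)$ converts pushouts and transfinite composites into these. Smith's theorem then produces the model structure with some (possibly strictly larger, inexplicit) set of generating trivial cofibrations; one identifies the fibrant objects as the $K$-complicial sets and the fibrations between them as the $K$-complicial fibrations by separate retract arguments using conditions (ii)--(iv), rather than by matching $K$ against $\mathrm{cof}\cap\mathcal{W}$.
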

\begin{proof}
Apply Jeff Smith's theorem \cite[125]{verity-weak-i}.
\end{proof}


\begin{example} Theorem \ref{thm:model} applies to the minimal set of elementary anodyne extensions
\[ J := \{ \Lambda^k[n] \hookrightarrow_r \Delta^k[n] \mid n \geq 1, k \in [n] \} \cup \{ \Delta^k[n]' \hookrightarrow_e \Delta^k[n]'' \mid n \geq 2, k \in [n] \}\] defining the \textbf{model structure for complicial sets}.
\end{example}

\begin{example}
Theorem \ref{thm:model} applies to the union of the minimal $J$ with 
 \[K^{\mathrm{tr}}_n := \{ \Delta[r] \hookrightarrow_e \Delta[r]_t \mid r > n\}\] defining the \textbf{model structure for $n$-trivial complicial sets}. 
\end{example}

\begin{example}
Theorem \ref{thm:model} applies to the union of the minimal $J$ with 
 \[ K^\mathrm{s} := \{ \Delta[m] \star \Delta[3]_{\mr{eq}} \star \Delta[n] \hookrightarrow \Delta[m] \star \Delta[3]^\sharp \star \Delta[n] \mid m,n \geq -1\}\] defining the \textbf{model structure for saturated complicial sets}.
\end{example}

\begin{example}
Theorem \ref{thm:model} applies to the union of the minimal $J$ with 
both $K^{\mathrm{tr}}_n$ and $K^\mathrm{s}$ defining the \textbf{model structure for $n$-trivial saturated complicial sets}.
\end{example}

By Example \ref{ex:q-cat-natural}, the $n=1$ case of this last result gives a new proof of Joyal's model structure for quasi-categories.

\end{document}